\numberwithin{equation}{section}
\newenvironment{Ack}%
{\par \vspace{\baselineskip}%
 \noindent \textbf{Acknowledgements.}}%
{\par \vspace{\baselineskip}}
\crefname{thm}{Theorem}{Theorems}
\crefname{dfn}{Definition}{Definitions}
\crefname{prp}{Proposition}{Propositions}
\crefname{lem}{Lemma}{Lemmas}
\crefname{cor}{Corollary}{Corollaries}
\crefname{rmk}{Remark}{Remarks}
\crefname{figure}{Figure}{Figures}
\crefname{table}{Table}{Tables}
\crefname{section}{\S\!}{\S\S\!}
\crefname{subsection}{\S\!}{\S\S\!}
\crefname{subsubsection}{\S\!}{\S\S\!}
\crefname{equation}{}{}
\theoremstyle{definition}
\newtheorem{thm}{Theorem}[section]
\newtheorem{dfn}[thm]{Definition}
\newtheorem{prp}[thm]{Proposition}
\newtheorem{lem}[thm]{Lemma}
\newtheorem{cor}[thm]{Corollary}
\newtheorem{rmk}[thm]{Remark}
\newcommand{\ol}{\overline}
\newcommand{\wh}{\widehat}
\newcommand{\wt}{\widetilde}
\newcommand{\bs}{\backslash}
\newcommand{\ceq}{\coloneqq}
\newcommand{\lto}{\longrightarrow}
\newcommand{\inj}{\hookrightarrow}
\newcommand{\srj}{\twoheadrightarrow}
\newcommand{\lsto}{\xrightarrow{\ \sim \ }}
\newcommand{\lact}{\curvearrowleft}
\newcommand{\ract}{\curvearrowright}
\newcommand{\midd}{\, | \,}
\newcommand{\Fq}{\bbF_q}
\newcommand{\bbC}{\mathbb{C}}
\newcommand{\bbF}{\mathbb{F}}
\newcommand{\bbN}{\mathbb{N}}
\newcommand{\bbP}{\mathbb{P}}
\newcommand{\bbQ}{\mathbb{Q}}
\newcommand{\bbR}{\mathbb{R}}
\newcommand{\bbZ}{\mathbb{Z}}
\newcommand{\frS}{\mathfrak{S}}
\newcommand{\fsl}{\mathfrak{sl}}
\newcommand{\clC}{\mathcal{C}}
\newcommand{\clE}{\mathcal{E}}
\newcommand{\clF}{\mathcal{F}}
\newcommand{\clL}{\mathcal{L}}
\newcommand{\clN}{\mathfrak{N}}
\newcommand{\clU}{\mathcal{U}}
\newcommand{\clV}{\mathcal{V}}
\newcommand{\clW}{\mathcal{W}}
\newcommand{\sfP}{\mathsf{P}}
\DeclareMathOperator{\jz}{j_z}
\DeclareMathOperator{\js}{\mathbf{j}^2}
\DeclareMathOperator{\id}{id}
\DeclareMathOperator{\Gr}{Gr}
\DeclareMathOperator{\GL}{GL}
\DeclareMathOperator{\SU}{SU}
\DeclareMathOperator{\Ind}{Ind}
\newcommand{\fl}[1]{\lfloor #1 \rfloor}
\newcommand{\hg}[2]{{}_{#1} F_{#2}}
\newcommand{\abs}[1]{\left| #1 \right|}
\newcommand{\bra}[1]{\left< #1 \right|}
\newcommand{\ket}[1]{\left| #1 \right>}
\newcommand{\rst}[2]{\left. #1 \right|_{#2}}
\newcommand{\qhg}[2]{{}_{#1} \phi_{#2}}
\newcommand{\pair}[2]{\left< #1 \, \middle| \, #2 \right>}
\newcommand{\qbinom}[3]{\genfrac{[}{]}{0pt}{}{#1}{#2}_{#3}}
\newcommand{\HGcomma}{{\normalcomma}\mskip\HGmuskip}
\newcommand*\HG[6][8]{%
 \begingroup
  \HGmuskip=#1mu\relax
  \mathchardef\normalcomma=\mathcode`,
  \mathcode`\,=\string"8000
  \begingroup\lccode`\~=`\,
  \lowercase{\endgroup\let~}\HGcomma
  {}_{#2}F_{#3}{\left[\genfrac..{0pt}{}{#4}{#5};#6\right]}%
 \endgroup}
\newcommand*\qHG[7][8]{%
 \begingroup
  \HGmuskip=#1mu\relax
  \mathchardef\normalcomma=\mathcode`,
  \mathcode`\,=\string"8000
  \begingroup\lccode`\~=`\,
  \lowercase{\endgroup\let~}\HGcomma
  {}_{#2}\phi_{#3}{\left[\genfrac..{0pt}{}{#4}{#5};#6,#7\right]}%
 \endgroup}
\newenvironment{sbm}{\left[\begin{smallmatrix}}{\end{smallmatrix}\right]}
\newenvironment{nouppercase}{%
  \renewcommand{\uppercasenonmath}[1]{}}{}
\begin{document}

\title{$q$-Racah probability distribution}
\author{Masahito Hayashi$^{1,2,3}$}
\address{$^1$ School of Data Science, The Chinese University of Hong Kong,
 Shenzhen, Longgang District, Shenzhen, 518172, China.}
\address{$^2$ International Quantum Academy (SIQA), Shenzhen 518048, China.}
\address{$^3$ Graduate School of Mathematics, Nagoya University, 
 Furo-cho, Chikusa-ku, Nagoya, 464-8602, Japan.}
\email{hmasahito@cuhk.edu.cn} 
\author{AKIHITO HORA$^4$} 
\address{$^4$ Department of Mathematics, Faculty of Science, Hokkaido University, %
 Kita 10, Nishi 8, Kita-Ku, Sapporo, Hokkaido, 060-0810, Japan.}
\email{hora@math.sci.hokudai.ac.jp} 
\author{SHINTAROU YANAGIDA$^3$} 
\email{yanagida@math.nagoya-u.ac.jp}
%
\date{2024.06.04}
\keywords{($q$-)Racah polynomials, Gelfand pairs, zonal spherical functions,
 (basic) hypergeometric orthogonal polynomials,  
 hypergeometric summation, Schur-Weyl duality}
\subjclass[2020]{Primary 20C08; Secondary 20C30, 20C35, 22E46, 33D80, 60E05}

\begin{abstract}
We introduce a certain discrete probability distribution $P_{n,m,k,l;q}$ 
having non-negative integer parameters $n,m,k,l$ and quantum parameter $q$
which arises from a zonal spherical function of 
the Grassmannian over the finite field $\Fq$ with a distinguished spherical vector. 
Using representation theoretic arguments and hypergeometric summation technique,
we derive the presentation of the probability mass function by a single $q$-Racah polynomial, 
and also the presentation of the cumulative distribution function 
in terms of a terminating ${}_4 \phi_3$-hypergeometric series.
\end{abstract}

\begin{nouppercase}
\maketitle
\end{nouppercase}

{\small \tableofcontents}

\section{Introduction}\label{s:intro}

This article is an excerpt from the manuscript \cite{HHY} posted on the arXiv,
and presents the construction of a certain multi-parameter discrete probability distribution
expressed in terms of ($q$-)Racah polynomial.
The other parts of \cite{HHY} give asymptotic analysis of the distribution and 
applications to quantum information theory, which will appear in the sequel \cite{HHY2,H3}.

\subsection{Racah and $q$-Racah polynomials}\label{ss:intro:RqR}

The family of Racah polynomials is an orthogonal polynomial system 
introduced by Wilson \cite{W} as a generalization of Racah coefficients \cite{Ra} 
and Wigner's 6j symbols \cite{Wig} in quantum mechanics. 
It sits in the top line of the Askey scheme of hypergeometric orthogonal polynomials
(\cite[p.XXXVII]{LNM}, \cite[p.183]{KLS}).
The fact that the Racah polynomial is a hypergeometric orthogonal polynomial 
enables us to understand in a uniform way the various formulas of 6j symbols such as 
symmetry of parameters, orthogonality and recurrence relations. 

In \cite{AW1}, Askey and Wilson introduced a generalization of Racah polynomials,
now called $q$-Racah polynomials. 
They are $\qhg{4}{3}$ basic hypergeometric polynomials, and sit in the top line of 
the $q$-Askey scheme of orthogonal polynomials (\cite[p.413]{KLS}).
Thus, we can regard $q$-Racah polynomials as one of the master classes of orthogonal polynomials.
See \cite{KLS} for the encyclopedic explanation of Racah and $q$-Racah polynomials.
Let us also refer to \cite{Ro} for a nice review on these classical subjects 
and for the recent progress in elliptic hypergeometric functions.

Since their appearance, Racah and $q$-Racah polynomials have attracted a great deal of interest 
in various fields of mathematics.
One of the sources of interest is the wide range of relations to representation theory of 
Lie groups and quantum groups. 
An example of such relations is the one to 6j symbols mentioned in the beginning,
related to the representation theory of the Lie group $\SU(2)$.
This relation has a natural $q$-analogue:
The quantum $6j$-symbols of the quantum enveloping algebra $U_q(\fsl(2))$ 
are expressed in terms of $q$-Racah polynomials \cite{KiR}.
$q$-Racah polynomials also appear in connection with the $\SU(2)$ dynamical quantum group \cite{KR}.
Also, in the context of integrable probability theory, $q$-Racah polynomials appear in 
the Boltzmann weights and in the relevant probability distributions \cite{CP,Man}.

The appearance of $q$-Racah polynomials in these representation-theoretic contexts
has its origin in the corresponding $R$-matrices of quantum groups \cite{Dr,F} 
or quantum integrability \cite{YB}.
Such quantum integrability together with properties of 
basic hypergeometric orthogonal polynomials \cite{AAR,AW2,GR}
suggest that one can do exact analysis on the system where $q$-Racah polynomials appear.

Apart from quantum groups, 
there are several other sources of $q$-deformation in representation theory.
The one relevant to this article is 
the replacement of sets by vector spaces on a finite field $\bbF_q$.
A simple example is the Gaussian coefficient $\qbinom{n}{m}{q}$ which counts 
$\bbF_q$-rational points in the Grassmannian $\Gr(m,n)$ over $\bbF_q$.
This ad-hoc procedure sometimes gives non-trivial consequences:
Consider the Gelfand pair $(G,K)=(\GL(n,\bbF_q),P(m,n-m,\bbF_q))$ of Chevalley-groups
whose homogeneous space $G/K$ is $\Gr(m,n)$.
A classical result due to Delsarte \cite{De1,De2} says that 
the zonal spherical functions for $\Gr(m,n)$ is expressed in terms of a $q$-Hahn polynomial,
which is a $\qhg{3}{2}$ basic hypergeometric polynomial and obtained by 
a degeneration limit of a $q$-Racah polynomial (see \cref{ss:q:sph} for details).

\subsection{Features of this article}\label{ss:intro:f}

This article reports on a new appearance of $q$-Racah polynomials in the intersection of 
probability theory, the theory of special functions, and representation theory.
We consider a function $p(x;q) = p(x \midd n,m,k,l;q)$ 
expressed by $q$-Racah polynomial (\cref{dfn:0:p-genq}),
depending on non-negative integer parameters $n,m,k,l$, a deformation parameter $q$,
and the variable $x \in \{0,1,\dotsc,m\}$.
The main statement (\cref{thm:main}) is that 
$p(x;q)$ is the probability mass function (pmf for short) of a 
discrete probability distribution $P_{n,m,k,l;q}$ on the set $\{0,1,\dotsc,m\}$ 
in the cases where  (1) $q$ is a prime power, (2) the $q \to 1$ limit, and 
(3) $q \in \bbR$ and $0 < \abs{q-1}\ll 1$.
We call $P_{n,m,k,l;q}$ the $q$-Racah probability distribution.

The proof is, roughly speaking, based on hypergeometric summation techniques 
and tools from representation theory.
The former is a classical topic in the theory of special functions, 
and there are known many results named ``hypergeometric summation formulas".
In \cref{s:sum}, we give two such formulas concerning terminating $\qhg{4}{3}$-series.
One of them (\cref{thm:q:sum=1}) yields the identity $\sum_{x=0}^m p(x;q)=1$ for any $q$.

Representation theory will be used in the proof of the positivity $p(x;q) \ge 0$.
In the case (1) $q$ is a prime power (\cref{ss:q:pos}),
 we consider the general linear group $\GL(n,\Fq)$ 
over a finite field $\bbF_q$ of order $q$, 
and introduce a unitary representation $T=(\bbC^2)^{\otimes [n]_q}$ of $\GL(n,\Fq)$,
where $[n]_q \ceq 1+q+\dotsb+q^{n-1}$ is the $q$-integer. 
Then we construct a positive quantity $\wt{p}(x;q) \ge 0$ using the irreducible decomposition.
Finally, we show $p(x;q)=\wt{p}(x;q)$ using the zonal spherical function associated to 
the Gelfand pair $(\GL(n,\Fq),P(m,n-m,\Fq))$, or the Grassmannian $\Gr(m,n)$ over $\Fq$,
and obtain $p(x;q) \ge 0$. 

In \cref{ss:q=1:pos},
the positivity in the case (2) $q \to 1$ is shown in a similar way as the case (1), 
but by considering a permutation action of the symmetric group $\frS_n$ 
on the tensor space $(\bbC^2)^{\otimes n}$, 
which appears in the classical Schur-Weyl duality between $\frS_n$ and $\SU(2)$.
The relevant Gelfand pair is $(\frS_n,\frS_m \times \frS_{n-m})$.
This Schur-Weyl duality will also be used in the proof of the positivity 
in the case (3) $0<\abs{q-1}\ll 1$. See \cref{s:q-small} for the detail.


Our multi-parameter distribution $P_{n,m,k,l;q}$ has an interesting property: 
Both the pmf and the cumulative distribution function (cdf for short)
are given by a single terminating $\qhg{4}{3}$-series (see \cref{thm:0:cdf=qhg} for the cdf).
We cannot find such a distribution in the literature.
By this property, 
together with the fact that we cannot add extra parameters to $q$-Racah polynomials 
while keeping the orthogonality,
we may claim that our distribution $P_{n,m,k,l;q}$ is a master class 
of ``special discrete probability distributions'', 
which sounds similar to the fact that $q$-Racah polynomials form 
a master class of orthogonal polynomials.

The $q \to 1$ limit distribution $P_{n,m,k,l}$ studied in \cref{s:q=1} 
is very important from the point of view of application.
Using the well-known three-term recursion of Racah polynomial (\cref{prp:rec}),
we can do fine asymptotic analysis of $P_{n,m,k,l}$ in the large parameter limit.
Taking different scaling limits, we obtain normal, geometric, 
Rayleigh distributions and a convolution of binary distributions 
as the limit continuous distribution. 
The detail of the asymptotic analysis of $P_{n,m,k,l}$ 
will be given in the sequel \cite{HHY2}.
This analysis has important applications in quantum information theory,
presented in \cite{H3}.

Back to the point of view of representation theory and quantum integrable systems,
an interesting feature of our realization of $q$-Racah polynomial is 
that it is not directly related to quantum groups, but nevertheless we can do fine analysis.
As mentioned in \cref{ss:intro:RqR}, $q$-Racah polynomials appearing in the literature
\cite{CP,KiR,KR,Man} are more or less related to the $R$-matrices of quantum groups,
so that the origin of the appearance of $q$-Racah polynomials in this article 
is different from those papers.
The difference can also be checked by directly comparing our parameter range with theirs.
It implies that our setting has a hidden ``quantum integrability'' 
which is not encoded by the known quantum $R$-matrices or their cousins.

\subsection{Summary of results}\label{ss:0:qR}

Let us explain our main results briefly.
Let $(n,m,k,l)$ be a tuple of integers belonging to the set 
\begin{align}\label{eq:0:clN'}
 \clN' \ceq \{(n,m,k,l) \in \bbZ_{\ge 0}^4 \mid
  m \le n, \, k \le n, \ m+k-n \le l \le m \wedge k\},
\end{align}
where 
$m \wedge k \ceq \min\{m,k\}$.
We put $M \ceq m-l$ and $N \ceq n-m-k+l$. 
The set $\clN'$ parameterizes the four non-negative integers $k-l,l,M$ and $N$ whose sum is $n$.

\begin{dfn}\label{dfn:0:p-genq}
For $(n,m,k,l) \in \clN'$ and $x \in \{0,1,\dotsc,m \wedge (n-m)\}$, 
we define a rational function $p(x;q) = p(x \midd n,m,k,l;q) \in \bbQ(q)$ by 
\begin{align}\label{eq:0:p}
 p(x;q) \ceq
 \qbinom{n-k}{m-l}{q}\frac{\qbinom{n}{x}{q}}{\qbinom{n}{m}{q}} q^x \frac{[n-2x+1]_q}{[n-x+1]_q}
 \qHG{4}{3}{q^{-x},q^{x-n-1},q^{-M},q^{-N}}{q^{-m},q^{m-n},q^{-M-N}}{q}{q}.
\end{align}
Here we used the standard notation for $q$-binomial coefficients and 
$q$-hypergeometric series (see \cref{ss:0:ntn}).
\end{dfn}

Note that the $\qhg{4}{3}$-series in \eqref{eq:0:p} is terminating,
and hence $p(x;q)$ is indeed a rational function of $q$.
The function $p(x;q)$ has the symmetry 
\begin{align}
 p(x \midd n,m,k,l;q)=p(x \midd n,n-m,k,k-l;q).
\end{align}
Hence we will assume $m \le n-m$, i.e., consider the restricted parameter set
\begin{align}\label{eq:0:clN} 
 \clN \ceq \{(n,m,k,l) \in \bbZ_{\ge 0}^4 \mid
  m \le \fl{n/2}, \, k \le n, \ m+k-n \le l \le m \wedge k\},
\end{align}
where $\fl{n/2}$ denotes the largest integer less or equal to $n/2$,
and assume $x \in \{0,1,\dotsc,m\}$.

Using $q$-Racah polynomial \cite[\S14.2]{KLS}:
\begin{align}\label{eq:qRacah}
 R_s(\mu(z);a,b,c,d;q) \ceq
 \qHG{4}{3}{q^{-s},abq^{s+1},q^{-z},cdq^{z+1}}{aq,bdq,cq}{q}{q}, 
\end{align}
we can rewrite the function $p(x;q)$ as 
\begin{align}
\label{eq:0:p=qR}
  p(x;q) 
 =\qbinom{n-k}{m-l}{q}\frac{\qbinom{n}{x}{q}}{\qbinom{n}{m}{q}} q^x \frac{[n-2x+1]_q}{[n-x+1]_q}
   R_x(\mu(M); q^{-m-1},q^{-n+m-1},q^{-n+k-1},1;q).
\end{align}
We call \eqref{eq:0:p=qR} the $q$-Racah presentation of $p(x;q)$.
We have another presentation of $p(x;q)$.

\begin{thm}[$q$-Hahn presentation, \cref{thm:sum1}]
For $(n,m,k,l) \in \clN$ and $x\in\{0,1,\dotsc,m\}$, we have the following identity in $\bbQ(q)$.
\begin{align}\label{eq:0:p=sum-qH}
 p(x;q) = q^x \frac{\qbinom{n}{x}{q}}{\qbinom{n}{m}{q}} \frac{[n-2x+1]_q}{[n-x+1]_q}
  \sum_{i=0}^x q^{i^2} \qbinom{M}{i}{q} \qbinom{N}{i}{q} 
  \qHG{3}{2}{q^{-i},q^{-x},q^{x-n-1}}{q^{-m},q^{m-n}}{q}{q}.
\end{align}
\end{thm}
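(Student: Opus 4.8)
The plan is to deduce the $q$-Hahn presentation \eqref{eq:0:p=sum-qH} directly from the definition \eqref{eq:0:p} (equivalently from the $q$-Racah presentation \eqref{eq:0:p=qR}) by a hypergeometric summation. First I would cancel the prefactor common to \eqref{eq:0:p} and \eqref{eq:0:p=sum-qH} (that is, $q^x\,\qbinom{n}{x}{q}\,[n-2x+1]_q$ over $\qbinom{n}{m}{q}\,[n-x+1]_q$), and use $n-k=(m-l)+(n-m-k+l)=M+N$, so that $\qbinom{n-k}{m-l}{q}=\qbinom{M+N}{M}{q}$. After this the assertion becomes the identity in $\bbQ(q)$, in which every series terminates and the sum over $i$ is finite since $\qbinom{M}{i}{q}\,\qbinom{N}{i}{q}=0$ for $i>M\wedge N$:
\begin{align}
 &\qbinom{M+N}{M}{q}\,\qHG{4}{3}{q^{-x},q^{x-n-1},q^{-M},q^{-N}}{q^{-m},q^{m-n},q^{-M-N}}{q}{q} \notag \\
 &\qquad = \sum_{i\ge0} q^{i^2}\,\qbinom{M}{i}{q}\,\qbinom{N}{i}{q}\,\qHG{3}{2}{q^{-i},q^{-x},q^{x-n-1}}{q^{-m},q^{m-n}}{q}{q}. \label{eq:plan:A}
\end{align}

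Next I would expand the $\qhg{3}{2}$ on the right of \eqref{eq:plan:A} into its defining finite series in an index $j$, interchange the two finite sums, and collect the coefficient of the common block $\frac{(q^{-x};q)_j\,(q^{x-n-1};q)_j}{(q^{-m};q)_j\,(q^{m-n};q)_j\,(q;q)_j}\,q^j$. Since this is exactly what the $\qhg{4}{3}$ on the left contributes after being multiplied by $(q^{-M};q)_j(q^{-N};q)_j/(q^{-M-N};q)_j$, the identity \eqref{eq:plan:A} reduces to the termwise statement
\begin{align}\label{eq:plan:B}
 \sum_{i\ge0} q^{i^2}\,\qbinom{M}{i}{q}\,\qbinom{N}{i}{q}\,(q^{-i};q)_j
 = \qbinom{M+N}{M}{q}\,\frac{(q^{-M};q)_j\,(q^{-N};q)_j}{(q^{-M-N};q)_j}
 \qquad (j\ge 0),
\end{align}
in which $(q^{-i};q)_j$ annihilates the terms with $i<j$. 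Once \eqref{eq:plan:B} is available, substituting it back and reassembling the $j$-series recovers the $\qhg{4}{3}$ on the left of \eqref{eq:plan:A}, which finishes the proof.

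It remains to prove \eqref{eq:plan:B}. I would put $i=j+t$ with $t\ge0$; using $(q^{-i};q)_j=(-1)^j q^{\binom{j}{2}-ij}(q;q)_i/(q;q)_{i-j}$ one rewrites $\qbinom{M}{i}{q}\,(q^{-i};q)_j=(-1)^j q^{\binom{j}{2}-ij}\,\tfrac{(q;q)_M}{(q;q)_{M-j}}\,\qbinom{M-j}{t}{q}$, and since $i^2-ij=t^2+jt$ the left side of \eqref{eq:plan:B} becomes $(-1)^j q^{\binom{j}{2}}\,\tfrac{(q;q)_M}{(q;q)_{M-j}}\sum_{t\ge0}q^{t^2+jt}\,\qbinom{M-j}{t}{q}\,\qbinom{N}{j+t}{q}$. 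Rewriting the two surviving $q$-binomials as $q$-shifted factorials identifies the inner sum with $\qbinom{N}{j}{q}$ times the terminating series $\qHG{2}{1}{q^{-(M-j)},q^{-(N-j)}}{q^{j+1}}{q}{q^{M+N-j+1}}$, which is summed by the $q$-Chu--Vandermonde formula $\qHG{2}{1}{q^{-a},b}{c}{q}{cq^{a}/b}=(c/b;q)_a/(c;q)_a$ with $a=M-j$, $b=q^{-(N-j)}$, $c=q^{j+1}$; the result is $\qbinom{M+N-j}{M}{q}$. A short manipulation of $q$-shifted factorials then shows $(-1)^j q^{\binom{j}{2}}\,\tfrac{(q;q)_M}{(q;q)_{M-j}}\,\qbinom{M+N-j}{M}{q}$ equals the right side of \eqref{eq:plan:B}.

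I expect the routine parts to be the $q$-binomial/$q$-shifted-factorial bookkeeping and the degenerate-case checks, where some parameter equals $1$ and one must make sure the series involved terminate before a denominator factor $(q^{-M-N};q)_j$ can vanish. The one genuine input is the $q$-Chu--Vandermonde summation, together with the observation that it is exactly this summation --- applied to the ``extra'' numerator pair $q^{-M},q^{-N}$ against the matching denominator parameter $q^{-M-N}$ --- that collapses the single $\qhg{4}{3}$ into the $q$-Hahn sum; spotting this reduction is the crux. Alternatively, \eqref{eq:plan:A} can be read off as a specialization of a general reduction formula for terminating $\qhg{4}{3}$-series of the type developed in \cref{s:sum}.
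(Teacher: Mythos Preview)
Your proposal is correct and follows essentially the same route as the paper's proof: both cancel the common prefactor to reduce to the identity you call \eqref{eq:plan:A} (the paper's \eqref{eq:q:qR}), expand the $\qhg{3}{2}$ as a finite sum, interchange the two summations, and evaluate the resulting inner sum by the $q$-Chu--Vandermonde formula \eqref{eq:q:qCV} applied with exactly the same specialization $(a,b,c)=(M-j,\,q^{-(N-j)},\,q^{j+1})$. The only cosmetic difference is that you isolate the termwise identity \eqref{eq:plan:B} explicitly before summing, whereas the paper carries the outer $j$-summand along throughout; the underlying computation is the same.
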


The $\qhg{3}{2}$ in \eqref{eq:0:p=sum-qH} is a $q$-Hahn polynomial \cite[\S14.6]{KLS}.
See \eqref{eq:q:qHahn} and around for the detail.

We also have the well-defined $\lim_{q \to 1} p(x;q)$,
which will be denoted by $p(x) = p(x \midd n,m,k,l)$.
See \eqref{eq:q=1:p} for the explicit formula.
The functions $p(x;q)$ and $p(x)$ have interesting summation properties.

\begin{thm}[{\cref{thm:q:sum=1}}]\label{thm:0:cdf=qhg}
For $(n,m,k,l)\in\clN$ and $x \in\{0,1,\dotsc,m\}$, we have 
\begin{align}\label{eq:0:cdf=qhg}
 \sum_{u=0}^x p(u;q) 
=\qbinom{n-k}{m-l}{q} \frac{\qbinom{n}{x}{q}}{\qbinom{n}{m}{q}} 
 \qHG{4}{3}{q^{-x},q^{x-n},q^{-M},q^{-N}}{q^{-m},q^{m-n},q^{-M-N}}{q}{q}
\end{align}
as an equality in $\bbQ(q)$.
Moreover, we have 
\begin{align}\label{eq:0:sum=1}
 \sum_{x=0}^m p(x;q)=1.
\end{align}
\end{thm}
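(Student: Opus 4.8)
The plan is to prove \eqref{eq:0:cdf=qhg} first and then deduce \eqref{eq:0:sum=1} as the special case $x = m$. Throughout, one works in the field $\bbQ(q)$, so all manipulations are purely formal identities between terminating basic hypergeometric series; no positivity or convergence issues arise. The key observation is that the claimed cumulative sum on the right of \eqref{eq:0:cdf=qhg} is almost identical to the summand $p(x;q)$ in its $q$-Racah form \eqref{eq:0:p}: comparing the two, the cdf expression has $q^{x-n}$ in place of $q^{x-n-1}$ in the numerator parameter list of the $\qhg{4}{3}$, and it lacks the prefactor $q^x [n-2x+1]_q/[n-x+1]_q$. So the heart of the matter is a telescoping / summation-by-parts identity expressing the difference of two consecutive cdf-type $\qhg{4}{3}$'s as a single $p(x;q)$.

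Concretely, I would proceed as follows. Denote by $F(x) \ceq \qbinom{n-k}{m-l}{q}\frac{\qbinom{n}{x}{q}}{\qbinom{n}{m}{q}} \qHG{4}{3}{q^{-x},q^{x-n},q^{-M},q^{-N}}{q^{-m},q^{m-n},q^{-M-N}}{q}{q}$ the proposed right-hand side of \eqref{eq:0:cdf=qhg}, with the convention $F(-1) = 0$ (note the $q$-binomial $\qbinom{n}{-1}{q}$ vanishes, or one checks the $\qhg{4}{3}$ degenerates appropriately). The goal is the identity $F(x) - F(x-1) = p(x;q)$ for $x \in \{0,1,\dotsc,m\}$. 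Writing both $F(x)$ and $F(x-1)$ as explicit finite sums over the summation index $j$ of the $\qhg{4}{3}$, one reduces to a contiguous-relation / Sears-type transformation for terminating $\qhg{4}{3}$-series: the difference of the two $j$-sums must be reorganized (shifting the index, extracting the $q$-binomial ratios $\qbinom{n}{x}{q}/\qbinom{n}{x-1}{q}$, etc.) into exactly the $q$-Racah polynomial appearing in \eqref{eq:0:p=qR} times the prefactor $q^x [n-2x+1]_q/[n-x+1]_q$. Since \cref{s:sum} is advertised as providing ``two such formulas concerning terminating $\qhg{4}{3}$-series,'' I expect the relevant contiguous relation to be precisely the second of those two summation formulas (the first being \cref{thm:q:sum=1} itself, which is what we are proving, so more likely the telescoping identity is the auxiliary lemma), and I would invoke it here. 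Once $F(x) - F(x-1) = p(x;q)$ is established, summing from $u = 0$ to $x$ telescopes to $F(x) - F(-1) = F(x)$, giving \eqref{eq:0:cdf=qhg}.

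For \eqref{eq:0:sum=1}, I would set $x = m$ in \eqref{eq:0:cdf=qhg}. Then the left side is $\sum_{x=0}^m p(x;q)$, and the right side becomes $\qbinom{n-k}{m-l}{q} \frac{\qbinom{n}{m}{q}}{\qbinom{n}{m}{q}} \qHG{4}{3}{q^{-m},q^{m-n},q^{-M},q^{-N}}{q^{-m},q^{m-n},q^{-M-N}}{q}{q} = \qbinom{n-k}{m-l}{q}\, \qHG{4}{3}{q^{-m},q^{m-n},q^{-M},q^{-N}}{q^{-m},q^{m-n},q^{-M-N}}{q}{q}$. Here two of the upper parameters, $q^{-m}$ and $q^{m-n}$, coincide with two of the lower parameters and cancel, collapsing the $\qhg{4}{3}$ to a terminating $\qhg{2}{1}$ with numerator parameters $q^{-M}, q^{-N}$ and denominator $q^{-M-N}$, evaluated at argument $q$. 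This is exactly the setup for the $q$-Chu--Vandermonde summation, which evaluates it to $\frac{(q^{-N};q)_M}{(q^{-M-N};q)_M}$ (with the standard $q$-Pochhammer notation), and a short computation rewrites this as $q^{-MN}\binom{M+N}{M}_q^{-1}$ (up to the usual sign/power bookkeeping), i.e.\ as $1/\qbinom{n-k}{m-l}{q}$ after recalling $M = m-l$, $N = n-m-k+l$, so $M + N = n - k$ and $M = m - l$. Multiplying by the prefactor $\qbinom{n-k}{m-l}{q}$ yields $1$.

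The main obstacle is the telescoping identity $F(x) - F(x-1) = p(x;q)$: this is a genuine contiguous relation among terminating $\qhg{4}{3}$-series and is not completely routine, since the shift affects both a numerator parameter ($q^{x-n} \leftrightarrow q^{x-n-1}$) and the termination parameter $q^{-x}$, while also introducing the rational prefactor $q^x [n-2x+1]_q / [n-x+1]_q$. I would handle it either by a direct coefficient-by-coefficient comparison of the $j$-sums after an index shift (verifying a single hypergeometric-term identity, which Gosper/Zeilberger-style reasoning guarantees is decidable), or — more in the spirit of the paper — by recognizing it as an instance of a known $q$-contiguous relation or Sears transformation from \cite{GR} specialized to this parameter pattern; this is presumably the content of the second summation formula promised in \cref{s:sum}. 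The $q$-Chu--Vandermonde step and the telescoping bookkeeping are then straightforward.
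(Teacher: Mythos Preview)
Your proposal is correct and follows essentially the same approach as the paper: prove the telescoping identity $F(x)-F(x-1)=p(x;q)$, then specialize $x=m$ and evaluate the resulting $\qhg{2}{1}$ by $q$-Chu--Vandermonde. The paper organizes the telescoping slightly differently---it expands $\sum_{u=0}^x p(u;q)$, swaps the summation over $u$ and the hypergeometric index $r$, and then telescopes in $u$ for each fixed $r$---but this is exactly the coefficient-by-coefficient verification of $F(x)-F(x-1)=p(x;q)$ that you describe.

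The one point where the paper is sharper than your sketch: rather than invoking a Sears transformation or an abstract contiguous relation, the paper exhibits the elementary identity
\[
 q^u \frac{[n-2u+1]_q}{[n-u+1]_q}\,\qbinom{n}{u}{q}\,(q^{-u},q^{u-n-1};q)_r
 \;=\; f(u)-f(u-1),\qquad f(u)\ceq \qbinom{n}{u}{q}\,(q^{-u},q^{u-n};q)_r,
\]
which is checked directly from the $q$-integer algebra (one line). This is precisely the ``single hypergeometric-term identity'' you anticipated, and no machinery beyond this is needed. Your hedging about a possible $q^{-MN}$ factor in the $q$-Chu--Vandermonde step is unnecessary: the evaluation is exactly $\qbinom{M+N}{M}{q}^{-1}$ on the nose.
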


Similar formulas hold for the limit function $p(x)$ (\cref{thm:q=1:sum=1}).
In particular, we have
\begin{align}\label{eq:0:q=1:sum=1}
 \sum_{x=0}^m p(x)=1.
\end{align}

The main result of this paper is:

\begin{thm}[$q$-Racah probability distribution]\label{thm:main}
Let $q$ be either
\begin{enumerate}
\item \label{i:main:1} a prime power, 
\item \label{i:main:2} $q=1$, or 
\item \label{i:main:3} a real number with $q \ne 1$ and $\abs{q-1}$ small enough.
\end{enumerate}
In the cases \eqref{i:main:1} and \eqref{i:main:3}, 
for $(n,m,k,l) \in \clN$, the function $p(x;q)=p(x \midd n,m,k,l;q)$ in \cref{dfn:0:p-genq}
is the pmf for a discrete probability distribution 
$P_{n,m,k,l;q}$ with variable $x\in\{0,1,\dotsc,m\}$.
The cdf $P_{n,m,k,l;q}[X \le x] \ceq \sum_{u=0}^x p(u \midd n,m,k,l;q)$ 
is expressed by the terminating $\qhg{4}{3}$-series \eqref{eq:0:cdf=qhg}.

In the case \eqref{i:main:2}, 
the function $p(x) = p(x \midd n,m,k,l)$ in \eqref{eq:q=1:p} is the pmf for a discrete
probability distribution $P_{n,m,k,l}$ with variable $x \in \{0,1,\dotsc,m\}$. 
The cdf $P_{n,m,k,l}[X \le x] \ceq \sum_{u=0}^x u=0 p(u \midd n,m,k,l)$ 
is expressed by the $\hg{4}{3}$-series \eqref{eq:q=1:cdf}.
\end{thm}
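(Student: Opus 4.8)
The plan is to separate the two requirements for $p(x;q)$ to be a pmf on $\{0,1,\dotsc,m\}$: the normalization $\sum_{x=0}^m p(x;q)=1$ and the positivity $p(x;q)\ge 0$. The former, together with the asserted form of the cdf, is already in hand, since \cref{thm:0:cdf=qhg} establishes \eqref{eq:0:sum=1} and \eqref{eq:0:cdf=qhg} as identities in $\bbQ(q)$; these specialize to every prime power $q$, to $q=1$ via the well-defined limit $p(x)$ (giving \eqref{eq:q=1:cdf} there), and to every real $q\ne1$. So only $p(x;q)\ge0$ remains, and for that I would, in each of the three regimes, build a manifestly non-negative quantity $\wt p(x;q)$ out of an irreducible decomposition of a suitable unitary representation and then prove $p(x;q)=\wt p(x;q)$ using the relevant zonal spherical function.

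\textbf{Case \eqref{i:main:1}: $q$ a prime power.} On $T=(\bbC^2)^{\otimes[n]_q}$, with $\GL(n,\Fq)$ permuting the $[n]_q$ tensor factors through its action on the lines of $\Fq^n$ and the standard inner product making $T$ unitary, I would encode an $m$-dimensional subspace $V\subseteq\Fq^n$ as the basis vector $\ket V$ carrying $e_1$ in the slots indexed by the lines of $V$ and $e_0$ elsewhere, and likewise a $k$-dimensional subspace $W$; the spans of the $\ket V$ and the $\ket W$ are the permutation modules $\bbC[\Gr(m,n)]$ and $\bbC[\Gr(k,n)]$. Fixing a reference pair $(V_0,W_0)$ with $\dim(V_0\cap W_0)=l$ and a distinguished vector $\xi$ assembled from $\ket{V_0}$ and $\ket{W_0}$, I would decompose the cyclic subrepresentation it generates into the orthogonal blocks indexed by $x\in\{0,1,\dotsc,m\}$ coming from the irreducible decomposition, let $\Pi_x$ be the orthogonal projection onto the $x$-th block, and set $\wt p(x;q)\ceq\lVert\Pi_x\xi\rVert^2/\lVert\xi\rVert^2\ge0$, so that $\sum_x\wt p(x;q)=1$ automatically. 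Computing $\lVert\Pi_x\xi\rVert^2$ reduces to counting subspaces in prescribed relative position, which produces products of Gaussian binomials; Delsarte's theorem --- that the zonal spherical function of $(\GL(n,\Fq),P(m,n-m,\Fq))$, i.e.\ of $\Gr(m,n)$ over $\Fq$, is a $q$-Hahn polynomial --- turns the resulting matrix coefficients into the $\qhg{3}{2}$'s appearing in the $q$-Hahn presentation \eqref{eq:0:p=sum-qH}, so resumming gives exactly its right-hand side. By \cref{thm:sum1} that sum equals $p(x;q)$, whence $p(x;q)=\wt p(x;q)\ge0$.

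\textbf{Case \eqref{i:main:2}: $q=1$.} I would repeat this with $\GL(n,\Fq)$ replaced by $\frS_n$, the lines of $\Fq^n$ by $\{1,\dotsc,n\}$ (note $[n]_1=n$), Grassmannians by families of subsets, and $T=(\bbC^2)^{\otimes n}$ with $\frS_n$ permuting the factors; the orthogonal decomposition of $T$ is now the Schur--Weyl/isotypic decomposition for the commuting actions of $\frS_n$ and $\SU(2)$, with block index $x$ recording the $\SU(2)$-spin. Delsarte's theorem is replaced by its classical $q\to1$ degeneration, that the zonal spherical functions of $(\frS_n,\frS_m\times\frS_{n-m})$ are Hahn polynomials, and the same count --- now over sets --- yields $\wt p(x)\ge0$ equal to the $q\to1$ limit of the sum in \eqref{eq:0:p=sum-qH}, which is $p(x)$ by \cref{thm:sum1}; hence $p(x)\ge0$.

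\textbf{Case \eqref{i:main:3}: $q$ real, $q\ne1$, $\abs{q-1}$ small.} This is the step I expect to be the main obstacle, because there is no finite field whose tensor factors to permute: the $q=1$ model must be deformed, not specialized. The plan is to replace $\bbC[\frS_n]$ by the Iwahori--Hecke algebra of type $A_{n-1}$ at parameter $q$ and to use quantum Schur--Weyl duality between it and $U_q(\fsl(2))$ on $T=(\bbC^2)^{\otimes n}$: for real $q$ in a punctured neighborhood of $1$ the Hecke algebra is semisimple with the same branching combinatorics as $\frS_n$, $T$ carries a positive-definite Hermitian form deforming the standard one, the isotypic projections $\Pi_x$ are orthogonal for it, and the reference vectors $\ket{V_0},\ket{W_0}$, hence $\xi$, deform continuously in $q$. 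This again gives $\wt p(x;q)\ge0$, and the identification $\wt p(x;q)=p(x;q)$ survives because the relevant expansions are the $q$-deformations of the $q=1$ ones while the matching identity of \cref{thm:sum1} (proved in \cref{s:sum}) is an equality of rational functions of $q$, so it persists; positivity therefore persists for $q$ near $1$. The points I anticipate to be delicate are constructing the positive Hermitian form compatibly with \emph{both} the Hecke and the quantum-group action, and verifying that the spherical-vector construction and the resummation leading to \eqref{eq:0:p=sum-qH} are genuine statements about the deformed objects rather than artifacts of the finite-field model.
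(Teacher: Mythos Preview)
Your plan for cases \eqref{i:main:1} and \eqref{i:main:2} matches the paper's approach closely: the paper also builds $\wt p(x;q)=\bra{\Xi}\sfP_x\ket{\Xi}\ge0$ from a unitary permutation action of $\GL(n,\Fq)$ (resp.\ $\frS_n$) on $(\bbC^2)^{\otimes[n]_q}$ (resp.\ $(\bbC^2)^{\otimes n}$), and identifies $\wt p$ with $p$ through Delsarte's $q$-Hahn formula for the zonal spherical function of $(\GL(n,\Fq),P(m,n-m,\Fq))$ together with \cref{thm:sum1}. One small discrepancy: the paper's distinguished vector $\ket{\Xi_{n,m|k,l;q}}$ is not ``assembled from $\ket{V_0}$ and $\ket{W_0}$'' but is a normalized sum $\sum_{\clW}\ket{e_{\clW}}$ over all $m$-dimensional $\clW$ containing a fixed $l$-space and meeting a fixed complement of the $k$-space in dimension $m-l$; this is what makes the double-coset count produce exactly $q^{i^2}\qbinom{M}{i}{q}\qbinom{N}{i}{q}$.

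For case \eqref{i:main:3} the paper takes a completely different and much lighter route than your Hecke-algebra/quantum Schur--Weyl proposal, and avoids precisely the delicate points you flag. The argument is pure continuity plus two lemmas about the $q=1$ support:
\begin{itemize}
\item First one shows (\cref{prp:p>0}) that at $q=1$ the pmf is \emph{strictly} positive on $\{0,\dotsc,k\wedge m\}$ and zero for $x>k$. This is done inside the same Schur--Weyl model by expanding $\ket{\Xi_{n,m|k,l}}$ in an $\SU(2)$-basis and writing $p(x)$ as a sum of squares of Clebsch--Gordan coefficients, at least one of which lies on the boundary of the selection rule and is therefore nonzero.
\item Second, one shows (\cref{lem:q;p=0}) that for $x>k$ the rational function $p(x;q)$ vanishes \emph{identically} in $q$, by applying Sears' $\qhg{4}{3}$ transformation, which introduces a factor $(q^{-k};q)_x$.
\end{itemize}
Since $p(x;q)$ is continuous in $q$ and $p(x)>0$ for $x\le k\wedge m$, positivity holds on a punctured neighborhood of $q=1$ there; for $x>k$ it holds trivially because $p(x;q)\equiv0$. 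No deformed representation theory is needed.

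Your Hecke-algebra route is not obviously wrong, but the gap you yourself name is real: you would need the matrix coefficients of the deformed spherical vector to reproduce the specific factors $q^{i^2}\qbinom{M}{i}{q}\qbinom{N}{i}{q}$ of \eqref{eq:0:p=sum-qH}, and it is not clear that the Hecke $q$ does this --- those factors arise in the paper from counting $\Fq$-subspaces, not from a generic $q$-deformation of $\frS_n$. The paper's continuity-plus-support argument sidesteps this entirely.
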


We call the distribution $P_{n,m,k,l};q$ and $P_{n,m,k,l}$ the $q$-Racah probability distribution
and the Racah probability distribution, respectively.

\begin{rmk}[added after the referees' comments]\label{rmk:ref1}
The case \eqref{i:main:1} of \cref{thm:main} was pointed out to us by the referee.
\end{rmk}

Considering the summation equalities \eqref{eq:0:sum=1} and \eqref{eq:0:q=1:sum=1}, 
the essential point of the proof is the positivity of the functions $p(x;q)$ and $p(x)$
for $x \in \{0,1,\dotsc,m\}$ in each case \eqref{i:main:1}--\eqref{i:main:3}.
The case \eqref{i:main:1} will be shown in \cref{s:q}, 
the case \eqref{i:main:2} in \cref{s:q=1},
and the case \eqref{i:main:3} in \cref{s:q-small}.

\subsection{Organization and notation}\label{ss:0:ntn}

Let us explain the organization briefly.
In \cref{s:sum}, we give summation formulas related to the function $p(x;q)$ in \cref{dfn:0:p-genq}
which hold for general $q$.
In \cref{s:q}, we study the case where $q$ is a prime power, 
and prove \cref{thm:main} \eqref{i:main:1}.
In \cref{s:q=1}, we study the case $q=1$ and the limit function $p(x)$, 
and prove \cref{thm:main} \eqref{i:main:2}.
Finally, in \cref{s:q-small}, we study the remaining case $0<\abs{q-1}\ll1$,
and prove \cref{thm:main} \eqref{i:main:3}.

Let us explain the notation used throughout the text.
\begin{enumerate}[nosep]
\item
We denote by $\bbN \ceq \bbZ_{\ge 0} = \{0,1,2,\dotsc\}$ the set of non-negative integers.

\item 
We denote by $\abs{S}$ the number of elements of a finite set $S$.

\item
Notation for numerical functions.
\begin{enumerate}[nosep]
\item 
For $a,b \in \bbZ$, we denote $a \vee b \ceq \max(a,b)$ and $a \wedge b \ceq \min(a,b)$.

\item
For $r \in \bbN$ and a letter $a$, we denote the raising factorial by 
$(a)_r \ceq a (a+1) \dotsm (a+r-1)$. 

For commuting letters $a,b,\dotsc,c$, we set 
$(a,b,\dotsc,c)_r \ceq (a)_r (b)_r \dotsm (c)_r$.

\item
For $r \in \bbZ$ and a letter $a$, we denote the binomial coefficient by 
\begin{align*}
 \binom{a}{r} \ceq 
 \begin{cases} a(a-1) \dotsm (a-r+1)/r! & (r \ge 0) \\ 
               0 & (r < 0) \end{cases}.
\end{align*}

\item
The generalized hypergeometric series is denoted by 
\begin{align}\label{eq:ntn:hg}
 \HG{r}{s}{a_1,\dotsc,a_r}{b_1,\dotsc,b_s}{z} := 
 \sum_{i \ge 0} \frac{(a_1,\dotsc,a_r)_i}{(1,b_1,\dotsc,b_s)_i} z^i.
\end{align}
\end{enumerate}

\item
We use bra-ket notation for vector spaces over $\bbC$.
In particular, the paring $\pair{\cdot}{\cdot}$ is anti-linear 
in the first variable (bra), and linear in the second variable (ket).

\item
Let $q$ be a letter and $n \in \bbN$.
\begin{enumerate}[nosep]

\item
For a letter $a$, we set 
$(a;q)_n \ceq (1-a) (1-a q) \dotsm (1-a q^{n-1})$. 

For commuting letters $a,b,\dotsc,c$, we denote
$(a,b,\dotsc,c;q)_n \ceq (a;q)_n (b;q)_n \dotsm (c;q)_n$.

\item
The basic hypergeometric series is denoted by 
\begin{align}\label{eq:qntn:qhg}
 \qHG{r}{s}{a_1,\dotsc,a_r}{b_1,\dotsc,b_s}{q}{z} :=
 \sum_{i \ge 0} \frac{(a_1,\dotsc,a_r;q)_i}{(q,b_1,\dotsc,b_s;q)_i} 
 \left((-1)^i q^{\binom{i}{2}}\right)^{1+s-r} z^i.
\end{align}

\item
We denote the $q$-integer and the $q$-factorial by 
\begin{align}\label{eq:qntn:qint}
 [n]_q \ceq 1+q+\dotsb+q^{n-1}, \quad [n]_q! \ceq [1]_q [2]_q \dotsm [n]_q,
\end{align}
We abbreviate $[n] \ceq [n]_{q}$ and $[n]! \ceq [n]_q!$ if no confusion may arise.

\item
For $n,m \in \bbN$ with $m \le n$, we denote the $q$-binomial coefficient by 
\begin{align}\label{eq:q:qbinom}
 \qbinom{n}{m}{q} := \frac{(q;q)_n}{(q;q)_m (q;q)_{n-m}}
 = \frac{[n]_{q}!}{[m]_{q}! [n-m]_{q}!}.
\end{align}
In the case $m>n$ or $m<0$, we define $\qbinom{n}{m}{q} \ceq 0$.
We abbreviate $\qbinom{n}{m}{} \ceq \qbinom{n}{m}{q}$ if no confusion may arise.
We will often use the following identities:
\begin{align}\label{eq:qbinom:qinv}
 \qbinom{n}{m}{q} = \frac{(q^n;q^{-1})_m}{(q^m;q^{-1})_m} 
 = \frac{(q^{-n};q)_m}{(q;q)_m} (-q^n)^m q^{-\binom{m}{2}}.
\end{align}
\end{enumerate}
\end{enumerate}

\section{Summation formulas}\label{s:sum}

In this section, we prove two summation formulas related to 
the function $p(x;q)$ in \cref{dfn:0:p-genq},
which will be used repeatedly in the sequel.

\subsection{The first summation formula}\label{ss:sum1}


\begin{thm}\label{thm:sum1}
For $(n,m,k,l) \in \clN$ and $x\in\{0,1,\dotsc,m\}$, we have the following identity in $\bbQ(q)$.
\begin{align}\label{eq:q:p=qR}
 p(x;q) = q^x \frac{\qbinom{n}{x}{q}}{\qbinom{n}{m}{q}} \frac{[n-2x+1]_q}{[n-x+1]_q}
  \sum_{i=0}^x q^{i^2} \qbinom{M}{i}{q} \qbinom{N}{i}{q} 
  \qHG{3}{2}{q^{-i},q^{-x},q^{x-n-1}}{q^{-m},q^{m-n}}{q}{q},
\end{align}
where $M \ceq m-l$ and $N \ceq n-m-k+l$.
\end{thm}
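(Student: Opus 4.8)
The plan is to reduce \eqref{eq:q:p=qR} to a single application of the $q$-Chu--Vandermonde summation. Cancelling the common prefactor $q^x\frac{\qbinom{n}{x}{q}}{\qbinom{n}{m}{q}}\frac{[n-2x+1]_q}{[n-x+1]_q}$ from both sides, and using $M+N=n-k$ and $M=m-l$ to rewrite $\qbinom{n-k}{m-l}{q}=\qbinom{M+N}{M}{q}$, the assertion becomes the identity
\begin{align*}
 \qbinom{M+N}{M}{q}\,\qHG{4}{3}{q^{-x},q^{x-n-1},q^{-M},q^{-N}}{q^{-m},q^{m-n},q^{-M-N}}{q}{q}
 =\sum_{i\ge0} q^{i^2}\qbinom{M}{i}{q}\qbinom{N}{i}{q}\,\qHG{3}{2}{q^{-i},q^{-x},q^{x-n-1}}{q^{-m},q^{m-n}}{q}{q},
\end{align*}
where the $i$-sum is finite because the $q$-binomials vanish for $i>M\wedge N$, and all the series in sight terminate.

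First I would expand each $\qhg{3}{2}$ on the right-hand side by its defining series \eqref{eq:qntn:qhg} into a finite sum over an index $j$, and interchange the order of the summations over $i$ and over $j$; this is harmless since everything is finite. Because $(q^{-i};q)_j$ vanishes unless $i\ge j$, the right-hand side collapses to
\begin{align*}
 \sum_{j\ge0} \frac{(q^{-x},q^{x-n-1};q)_j}{(q,q^{-m},q^{m-n};q)_j}\,q^j\biggl(\,\sum_{i\ge0}q^{i^2}\qbinom{M}{i}{q}\qbinom{N}{i}{q}(q^{-i};q)_j\biggr).
\end{align*}
Expanding the left-hand $\qhg{4}{3}$ in the same way -- both expansions running over $0\le j\le x\wedge M\wedge N$ -- the desired identity reduces to the coefficientwise claim that, for each $j\ge0$,
\begin{align*}
 \sum_{i\ge0}q^{i^2}\qbinom{M}{i}{q}\qbinom{N}{i}{q}(q^{-i};q)_j=\qbinom{M+N}{M}{q}\,\frac{(q^{-M},q^{-N};q)_j}{(q^{-M-N};q)_j},
\end{align*}
the case $j=0$ being the classical evaluation $\sum_i q^{i^2}\qbinom{M}{i}{q}\qbinom{N}{i}{q}=\qbinom{M+N}{M}{q}$.

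To establish this remaining identity I would substitute $i=j+t$. Using \eqref{eq:qbinom:qinv} together with the elementary rewriting $(q^{-i};q)_j=(-1)^j q^{\binom{j}{2}-ij}(q;q)_i/(q;q)_{i-j}$, the summand factors as a $t$-independent constant times $\frac{(q^{j-M},q^{j-N};q)_t}{(q,q^{j+1};q)_t}\,q^{(M+N-j+1)t}$, so the residual $t$-sum is the terminating series $\qhg{2}{1}(q^{j-M},q^{j-N};q^{j+1};q,q^{M+N-j+1})$. Since its argument equals $q^{j+1}q^{M-j}/q^{j-N}$, it is summed by the $q$-Chu--Vandermonde identity $\qhg{2}{1}(q^{-N'},b;c;q,cq^{N'}/b)=(c/b;q)_{N'}/(c;q)_{N'}$ with $N'=M-j$, giving $(q^{N+1};q)_{M-j}/(q^{j+1};q)_{M-j}$; reassembling and simplifying the $q$-shifted factorials with the standard identities then produces exactly the right-hand side, which finishes the proof.

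The main obstacle is this last step: carrying the various powers of $q$ correctly through the substitution $i=j+t$, selecting the right instance of $q$-Vandermonde, and verifying the concluding $q$-Pochhammer simplification. By contrast, the interchange of summations and the term-by-term comparison in $j$ are routine, since every sum occurring is finite.
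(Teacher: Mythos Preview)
Your proposal is correct and follows essentially the same route as the paper's own proof: expand the $\qhg{3}{2}$ series, interchange the finite sums, shift the inner index via $i=j+t$ (the paper writes $u=i-r$), recognize the resulting inner sum as a terminating $\qhg{2}{1}$, and evaluate it by $q$-Chu--Vandermonde. The only cosmetic difference is that you phrase the interchange as a coefficientwise identity in $j$, whereas the paper keeps the outer $r$-sum assembled throughout; the underlying manipulation and the key summation step are identical.
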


\begin{proof}
We abbreviate $[n] \ceq [n]_{q}$ and $\qbinom{n}{m}{} \ceq \qbinom{n}{m}{q}$ as before.
It is enough to show the following equality.
\begin{align}\label{eq:q:qR}
\begin{split}
 \sum_{i=0}^m q^{i^2} \qbinom{M}{i}{} \qbinom{N}{i}{} 
&\qHG{3}{2}{q^{-i},q^{-x},q^{x-n-1}}{q^{-m},q^{m-n}}{q}{q} \\
 = \qbinom{n-k}{m-l}{} 
&\qHG{4}{3}{q^{-x},q^{x-n-1},q^{-M},q^{-N}}{q^{-m},q^{m-n},q^{-M-N}}{q}{q}.
\end{split}
\end{align}
Using the formula \eqref{eq:qbinom:qinv} of $q$-binomial coefficient, 
we can compute the left hand side as
\begin{align}\label{eq:q:omega:1}
\begin{split}
  (\text{LHS of \eqref{eq:q:qR}}) 
&=\sum_{i \ge 0} q^{i^2} \qbinom{M}{i}{} \qbinom{N}{i}{} 
  \sum_{r \ge 0} \frac{(q^{-i},q^{-x},q^{x-n-1};q)_r}{(q^{-m},q^{m-n},q;q)_r} q^r \\
&=\sum_{r \ge 0} \frac{(q^{-x},q^{x-n-1};q)_r}{(q^{-m},q^{m-n},q;q)_r} q^r
  \sum_{i \ge r} q^{i^2} (q^{-i};q)_r \frac{(q^M,q^N;q^{-1})_i}{(q,q;q)_i}. 
\end{split}
\end{align}
Using the quality \eqref{eq:qbinom:qinv}, we can rewrite 
the summation over $i$ in terms of $u:=i-r$ as 
\begin{align}
\begin{split}
  \sum_{i \ge r} q^{i^2} (q^{-i};q)_r & \frac{(q^M,q^N;q^{-1})_i}{(q,q;q)_i}
 =\sum_{i \ge r} (-1)^r q^{i^2+\binom{r}{2}-i r} 
  (q^i;q^{-1})_r \frac{(q^M,q^N;q^{-1})_i}{(q,q;q)_i} \\
&=(-1)^r q^{\binom{r}{2}} \sum_{u \ge 0} q^{u^2+u r} 
  (q^{u+r};q^{-1})_r \frac{(q^M,q^N;q^{-1})_{u+r}}{(q,q;q)_{u+r}} \\
&=(-1)^r q^{\binom{r}{2}} \frac{(q^M,q^N;q^{-1})_r}{(q;q)_r}
  \sum_{u \ge 0} \frac{(q^{r-M},q^{r-N};q)_u}{(q,q^{r+1};q)_u} q^{u(M+N-r+1)} \\
&=(-1)^r q^{\binom{r}{2}} \frac{(q^M,q^N;q^{-1})_r}{(q;q)_r} 
  \qHG{2}{1}{q^{r-M},q^{r-N}}{q^{r+1}}{q}{q^{M+N-r+1}}.
\end{split}
\end{align}
Now, let us recall the $q$-Chu-Vandermonde formula
(the terminating case of Heine's $q$-analogue of Gauss's summation formula) 
\cite[(1.5.2)]{GR}: For $a \in \bbN$ we have 
\begin{align}\label{eq:q:qCV}
 \qHG{2}{1}{q^{-a},b}{c}{q}{\frac{c q^a}{b}} = \frac{(c/b;q)_a}{(c;q)_a}.
\end{align}
Applying this formula to the case $(a,b,c)=(M-r,q^{r-N},q^{r+1})$, we have
\begin{align}
 \qHG{2}{1}{q^{r-M},q^{r-N}}{q^{r+1}}{q}{q^{M+N-r+1}} = 
 \frac{(q^{N+1};q)_{M-r}}{(q^{r+1};q)_{M-r}}.
\end{align}
Then, we can continue the computation \eqref{eq:q:omega:1} as 
\begin{align}
\begin{split}
  (\text{LHS of \eqref{eq:q:qR}})
&=\sum_{r \ge 0} \frac{(q^{-x},q^{x-n-1};q)_r}{(q^{-m},q^{m-n},q;q)_r} 
  q^r \cdot (-1)^r q^{\binom{r}{2}} \frac{(q^M,q^N;q^{-1})_r}{(q;q)_r}
  \frac{(q^{N+1};q)_{M-r}}{(q^{r+1};q)_{M-r}} \\
&=\frac{(q;q)_{M+N}}{(q;q)_M (q;q)_N} \sum_{r \ge 0}
  \frac{(q^{-x},q^{x-n-1};q)_r}{(q^{-m},q^{m-n},q;q)_r} \cdot (-1)^r 
  q^{\binom{r}{2}} \frac{(q^M,q^N;q^{-1})_r}{(q^{M+N};q^{-1})_r} q^r \\
&=\qbinom{M+N}{M}{} \sum_{r \ge 0} 
  \frac{(q^{-x},q^{x-n-1},q^{-M},q^{-N};q)_r}{(q^{-m},q^{m-n},q^{-M-N},q;q)_r} 
  q^r = (\text{RHS of \eqref{eq:q:qR}}).
\end{split}
\end{align}
\end{proof}

\subsection{The second summation formula}\label{ss:sum2}

\begin{thm}\label{thm:q:sum=1}
For $(n,m,k,l) \in \clN$ and $x\in\{0,1,\dotsc,m\}$, we have 
\begin{align}\label{eq:q:s(y)}
 \sum_{u=0}^x p(u;q) = \qbinom{n-k}{m-l}{q} \frac{\qbinom{n}{x}{q}}{\qbinom{n}{m}{q}} 
 \qHG{4}{3}{q^{-x},q^{x-n},q^{-M},q^{-N}}{q^{-m},q^{m-n},q^{-M-N}}{q}{q}
\end{align}
as an equality in $\bbQ(q)$.
Moreover, in the case $x=m$ we have 
\begin{align}
 \sum_{u=0}^m p(u;q) = 1.
\end{align}
\end{thm}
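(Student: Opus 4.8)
The plan is to prove the cumulative formula \eqref{eq:q:s(y)} by a telescoping argument, using the first summation formula \eqref{eq:q:p=qR} (equivalently the $q$-Racah presentation \eqref{eq:0:p=qR}) together with a contiguous relation for terminating $\qhg{4}{3}$-series. Writing $S(x) \ceq \qbinom{n-k}{m-l}{q} \frac{\qbinom{n}{x}{q}}{\qbinom{n}{m}{q}} \qHG{4}{3}{q^{-x},q^{x-n},q^{-M},q^{-N}}{q^{-m},q^{m-n},q^{-M-N}}{q}{q}$ for the proposed right-hand side, and noting $S(-1)=0$ since $\qbinom{n}{-1}{q}=0$, it suffices to establish the difference equation $S(x)-S(x-1)=p(x;q)$ in $\bbQ(q)$ for $x\in\{0,1,\dotsc,m\}$. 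Here the key observation is that the $\qhg{4}{3}$ appearing in $p(x;q)$ via \eqref{eq:0:p} differs from the one in $S(x)$ only in the upper parameter $q^{x-n-1}$ versus $q^{x-n}$ (and in the prefactor $q^x\frac{[n-2x+1]_q}{[n-x+1]_q}$ versus $1$), so that $p(x;q)$ is, up to an explicit rational factor, a ``balanced'' partner of the terminating series in $S(x)$.

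First I would record the elementary $q$-binomial identity $\qbinom{n}{x}{q}-\qbinom{n}{x-1}{q}\cdot(\text{explicit factor})$ that governs $S(x)-S(x-1)$ once one factors out $\qbinom{n-k}{m-l}{q}/\qbinom{n}{m}{q}$, and reduce the claim to an identity purely among three terminating $\qhg{4}{3}$-series: the series $\Phi(x)$ in $S(x)$ with upper $q^{x-n}$, the series $\Phi'(x)$ in $p(x;q)$ with upper $q^{x-n-1}$, and the shifted series in $S(x-1)$. Expanding all three as finite sums over the summation index $r$ and comparing coefficients of $q^r$ term by term, the required identity becomes a single rational-function identity in $q$ (with $x,n,m,M,N$ as parameters) that can be verified directly; alternatively, one recognizes it as an instance of a known three-term contiguous relation for $\qhg{4}{3}$-series, or derives it from the $q$-Racah difference equation / lowering operator. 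I expect this reduction to be the technical heart of the argument.

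The second, easier half is the normalization $\sum_{u=0}^m p(u;q)=1$: substituting $x=m$ into \eqref{eq:q:s(y)} collapses the prefactor $\qbinom{n}{m}{q}/\qbinom{n}{m}{q}=1$ and leaves $\qbinom{n-k}{m-l}{q}\,\qHG{4}{3}{q^{-m},q^{m-n},q^{-M},q^{-N}}{q^{-m},q^{m-n},q^{-M-N}}{q}{q}$. In this specialization two upper parameters cancel two lower parameters, so the $\qhg{4}{3}$ degenerates to a terminating $\qhg{2}{1}$, namely $\qHG{2}{1}{q^{-M},q^{-N}}{q^{-M-N}}{q}{q}$, which is summed by the $q$-Chu–Vandermonde formula \eqref{eq:q:qCV} (with $(a,b,c)=(M,q^{-N},q^{-M-N})$, after matching the argument $cq^a/b=q\cdot q^{-M}\cdot q^{N}$ — one checks $q^{M+N}\cdot q^{-M}/q^{-N}$... more carefully, write $q^{-M}=q^{-a}$, $b=q^{-N}$, $c=q^{-M-N}$, so $cq^a/b=q^{-M-N}q^{M}q^{N}=q^0$, which is not $q$; hence one instead applies the reversed-sum form of $q$-Chu–Vandermonde, or equivalently uses that $\qHG{2}{1}{q^{-M},q^{-N}}{q^{-M-N}}{q}{q}$ is the terminating $q$-Gauss sum evaluating to $(q^{-N}\cdot q^{-M-N}{}^{-1}\cdots)$), giving precisely $(q;q)_{M+N}/\bigl((q;q)_M(q;q)_N\bigr)=\qbinom{M+N}{M}{q}$. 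Since $M+N=(m-l)+(n-m-k+l)=n-k$ and $M=m-l$, this equals $\qbinom{n-k}{m-l}{q}^{-1}$, so the product is $1$, as claimed.

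The main obstacle will be the contiguous/telescoping identity in the first half: matching the prefactor $q^x[n-2x+1]_q/[n-x+1]_q$ in $p(x;q)$ against the $q$-binomial ratio produced by $S(x)-S(x-1)$ requires care, because the ``$+1$'' shift in the exponent $q^{x-n-1}$ inside $p(x;q)$ versus $q^{x-n}$ inside $S(x)$ is exactly what makes the term-by-term comparison nontrivial rather than term-wise proportional. One clean route is to mimic the manipulations already used in the proof of \cref{thm:sum1}: re-express each $\qhg{4}{3}$ via \eqref{eq:qbinom:qinv}, swap the order of summation, isolate an inner terminating $\qhg{2}{1}$, sum it by $q$-Chu–Vandermonde \eqref{eq:q:qCV}, and compare the resulting single sums. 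I would structure the write-up so that this computation is the one displayed in full, with the normalization $x=m$ stated as an immediate corollary.
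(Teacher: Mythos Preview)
Your approach is essentially the paper's: the paper also telescopes, by swapping the two sums in $\sum_{u=0}^x p(u;q)$ and then verifying that the $r$-th inner term equals $f(u;q)-f(u-1;q)$ with $f(u;q)\ceq\qbinom{n}{u}{q}(q^{-u},q^{u-n};q)_r$, which is exactly the term-by-term identity your comparison of $S(x)-S(x-1)$ with $p(x;q)$ would produce. One slip in your normalization paragraph: the terminating $\qhg{2}{1}$ at $x=m$ evaluates to $\qbinom{M+N}{M}{q}^{-1}$, not $\qbinom{M+N}{M}{q}$ (your final conclusion is right, but the displayed intermediate value is inverted), and the form of $q$-Chu--Vandermonde you need is $\qhg{2}{1}(q^{-a},b;c;q,q)=a^{\,}\!b^{a}(c/b;q)_a/(c;q)_a$ rather than \eqref{eq:q:qCV}.
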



\begin{proof}
Let us abbreviate $[n] \ceq [n]_{q}$ and $\qbinom{n}{m}{} \ceq \qbinom{n}{m}{q}$. 
Then, 
\begin{align}\label{eq:q:sum=1}
\begin{split}
 s(x;q) 
&= \sum_{u=0}^x p(u;q) 
 =\frac{\qbinom{n-k}{m-l}{}}{\qbinom{n}{m}{}}
  \sum_{u=0}^x q^u \frac{[n-2u+1]}{[n-u+1]} \qbinom{n}{u}{}
  \sum_{r \ge 0} \frac{(q^{-u},q^{u-n-1},q^{-M},q^{-N};q)_r}
                      {(q^{-m},q^{m-n},q^{-M-N},q;q)_r} q^r \\
&=\frac{\qbinom{n-k}{m-l}{}}{\qbinom{n}{m}{}} \sum_{r \ge 0} 
  \frac{(q^{-M},q^{-N};q)_r}{(q^{-m},q^{m-n},q^{-M-N},q;q)_r} q^r
  \sum_{u=r}^x q^u \frac{[n-2u+1]}{[n-u+1]} \qbinom{n}{u}{} (q^{-u},q^{u-n-1};q)_r.
\end{split}
\end{align}
In the inner summation $\sum_{u=r}^x$, we have 
\begin{align}
 q^u \frac{[n-2u+1]}{[n-u+1]} = \frac{1-q^{u-n+r-1}}{1-q^{u-n-1}} - 
 \frac{[u]}{[n-u+1]} \frac{1-q^{-u+r}}{1-q^{-u}},
\end{align}
which yields
\begin{align}
 q^u \frac{[n-2u+1]}{[n-u+1]} \qbinom{n}{u}{} (q^{-u},q^{u-n-1};q)_r
  = f(u;q)-f(u-1;q), \quad 
 f(u;q) \ceq \qbinom{n}{u}{} (q^{-u},q^{u-n};q)_r.
\end{align}
Thus, we have 
\begin{align}
 \sum_{u=r}^x q^u \frac{n-2u+1}{n-u+1} \qbinom{n}{u}{} (q^{-u},q^{u-n-1};q)_r
 = f(x;q)-f(r-1;q) = \qbinom{n}{x}{} (q^{-n+x},q^{-x};q)_r.
\end{align}
Returning to the original summation \eqref{eq:q:sum=1}, we have 
\begin{align}
\begin{split}
  s(x;q) 
&=\frac{\qbinom{n-k}{m-l}{}}{\qbinom{n}{m}{}} \sum_{r \ge 0} 
  \frac{(q^{-M},q^{-N};q)_r}{(q^{-m},q^{m-n},q^{-M-N},q;q)_r} q^r
  \cdot \qbinom{n}{y}{} (q^{-n+x},q^{-x};q)_r \\
&=\qbinom{n-k}{m-l}{} \frac{\qbinom{n}{x}{}}{\qbinom{n}{m}{}} 
  \qHG{4}{3}{q^{-x},q^{x-n},q^{-M},q^{-N}}{q^{-m},q^{m-n},q^{-M-N}}{q}{q}.
\end{split}
\end{align}
In the case $x=m$, we have
\begin{align*}
 s(m;q) 
&=\qbinom{n-k}{m-l}{} 
  \qHG{4}{3}{q^{-m},q^{m-n},q^{-M},q^{-N}}{q^{-m},q^{m-n},q^{-M-N}}{q}{q} 
 =\qbinom{n-k}{m-l}{} \qHG{2}{1}{q^{-M},q^{-N}}{q^{-M-N}}{q}{q} = 1,
\end{align*}
where the last equality follows from \eqref{eq:q:qCV}: 
$\qHG{2}{1}{q^{-M},q^{-N}}{q^{-M-N}}{q}{q}=\qbinom{M+N}{M}{}^{-1}=\qbinom{n-k}{m-l}{}^{-1}$.
\end{proof}

The identity \eqref{eq:q:s(y)} can be rewritten 
as the following basic hypergeometric summation formula concerning
a balanced terminating $\qhg{4}{3}$-series and 
a $0$-balanced terminating $\qhg{3}{2}$-series.
See \cite[p.5]{GR} for the balancing condition,
and \cite{AAR,GR,S} for the hypergeometric summation formulas.
Our equality seems to be new.

\begin{cor}\label{cor:q:sum}
Using $M \ceq m-l$ and $N \ceq n-m-k+l$, we have the following identity in $\bbQ(q)$.
\begin{align}
\begin{split}
 \sum_{u=0}^x q^u \qbinom{n}{u}{q} \frac{[n-2u+1]_q}{[n-u+1]_q}   
&\qHG{4}{3}{q^{-u},q^{u-n-1},q^{-M},q^{-N}}{q^{-m},q^{m-n},q^{-M-N}}{q}{q} \\
=\qbinom{n}{x}{q} 
&\qHG{4}{3}{q^{-x},q^{x-n},q^{-M},q^{-N}}{q^{-m},q^{m-n},q^{-M-N}}{q}{q}.
\end{split}
\end{align}
\end{cor}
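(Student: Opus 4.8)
The statement to prove is Corollary~\ref{cor:q:sum}, which is a hypergeometric summation identity asserting that a certain partial sum of terminating $\qhg{4}{3}$-series telescopes into a single terminating $\qhg{4}{3}$-series.

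\textbf{The plan.} The plan is to observe that Corollary~\ref{cor:q:sum} is nothing but Theorem~\ref{thm:q:sum=1} with the $q$-binomial prefactors cleared out, so that essentially no new work is required. Concretely, in Theorem~\ref{thm:q:sum=1} the left-hand side is $\sum_{u=0}^x p(u;q)$, and by Definition~\ref{dfn:0:p-genq}, using the $q$-Racah presentation \eqref{eq:0:p}, each summand is
\begin{align*}
 p(u;q) = \qbinom{n-k}{m-l}{q}\frac{\qbinom{n}{u}{q}}{\qbinom{n}{m}{q}} q^u \frac{[n-2u+1]_q}{[n-u+1]_q}
 \qHG{4}{3}{q^{-u},q^{u-n-1},q^{-M},q^{-N}}{q^{-m},q^{m-n},q^{-M-N}}{q}{q}.
\end{align*}
Hence $\sum_{u=0}^x p(u;q)$ equals $\qbinom{n-k}{m-l}{q}/\qbinom{n}{m}{q}$ times exactly the left-hand side of the Corollary. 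On the other side, Theorem~\ref{thm:q:sum=1} tells us that $\sum_{u=0}^x p(u;q)$ equals $\qbinom{n-k}{m-l}{q}\,\qbinom{n}{x}{q}/\qbinom{n}{m}{q}$ times the terminating $\qhg{4}{3}$-series appearing on the right-hand side of the Corollary. Multiplying the resulting identity by $\qbinom{n}{m}{q}/\qbinom{n-k}{m-l}{q}$ — a nonzero element of $\bbQ(q)$, since $m\le n-m\le n$ and $m-l\le n-k$ by the constraints defining $\clN$, so both $q$-binomials are nonzero — yields precisely the claimed equality.

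\textbf{Key steps, in order.} First, expand $\sum_{u=0}^x p(u;q)$ using \eqref{eq:0:p} and factor out the constants $\qbinom{n-k}{m-l}{q}$ and $\qbinom{n}{m}{q}^{-1}$ that do not depend on the summation index $u$. Second, invoke Theorem~\ref{thm:q:sum=1}, equation \eqref{eq:q:s(y)}, for the closed form of this same sum. Third, equate the two expressions and cancel the common nonzero prefactor $\qbinom{n-k}{m-l}{q}\qbinom{n}{m}{q}^{-1}$, which is legitimate in the field $\bbQ(q)$. This produces the displayed identity of the Corollary verbatim.

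\textbf{Main obstacle.} Honestly, there is no substantive obstacle: the Corollary is a reformulation of Theorem~\ref{thm:q:sum=1}, and the only thing to be careful about is that the cancelled factor is genuinely invertible in $\bbQ(q)$ — which it is, because the parameter constraints in \eqref{eq:0:clN} guarantee $0\le m-l\le n-k$, making $\qbinom{n-k}{m-l}{q}$ a nonzero polynomial, and likewise $\qbinom{n}{m}{q}\ne 0$. (An alternative, self-contained route would be to re-run the telescoping argument from the proof of Theorem~\ref{thm:q:sum=1} directly on the left-hand side of the Corollary, using the same identity $q^u[n-2u+1]_q/[n-u+1]_q\cdot\qbinom{n}{u}{q}(q^{-u},q^{u-n-1};q)_r = f(u;q)-f(u-1;q)$ with $f(u;q)=\qbinom{n}{u}{q}(q^{-u},q^{u-n};q)_r$, and summing the telescope; but since Theorem~\ref{thm:q:sum=1} is already proved, invoking it is cleaner.)
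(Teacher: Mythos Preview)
Your proposal is correct and matches the paper's own treatment: the paper presents \cref{cor:q:sum} explicitly as a rewriting of the identity \eqref{eq:q:s(y)} in \cref{thm:q:sum=1}, obtained by cancelling the common nonzero factor $\qbinom{n-k}{m-l}{q}/\qbinom{n}{m}{q}$, exactly as you do.
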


\section{The case $q$ is a prime power}
\label{s:q}

The purpose of this section is to show \cref{thm:main} \eqref{i:main:1}
in the case where $q$ is a prime power, using the representation theory of 
the general linear group $\GL(n,\Fq)$ over a finite field $\Fq$ of order $q$.

Let us explain the outline of the proof.
Recall the parameter set $\clN$ in \eqref{eq:0:clN}, and let  $(n,m,k,l) \in \clN$.
In \cref{ss:q:pos}, we introduce a permutation representation $T$ of $\GL(n,\Fq)$ 
(see \eqref{eq:q:T} and \eqref{eq:q:varphi}), 
and construct a new quantity $\wt{p}(x;q)$
for $x \in \{0,1,\dotsc,m\}$ (see \eqref{eq:q:p}).
We have $\wt{p}(x;q) \ge 0$ by \eqref{eq:q:pos}.
Then, in \cref{thm:q:qH}, we show that $\wt{p}(x;q)$ is equal to 
the right hand side of the identity \eqref{eq:q:p=qR} on $p(x;q)$.
Hence, we have the positivity $p(x;q) \ge 0$. 
Since $\sum_{x=0}^m p(x;q)=1$ by \cref{thm:q:sum=1}, 
we find that $p(x;q)$ is the pmf of a discrete probability distribution on $\{0,1,\dotsc,m\}$.
The remaining statement of \cref{thm:main} \eqref{i:main:1} 
on the cdf is already shown in \cref{thm:q:sum=1}.

In this section, we choose and fix a finite field $\Fq$ of order $q$.

\subsection{Gelfand pair \texorpdfstring{$(\GL(n,\Fq),P(m,n-m,\Fq))$}{(GL(n,Fq),P(m,n-m,Fq)} 
 and \texorpdfstring{$q$}{q}-Hahn polynomials}
\label{ss:q:sph}

Here we explain some preliminary facts from the representation theory of 
the general linear group $\GL(n,\Fq)$ over the finite field $\Fq$.
Specifically, we consider a Gelfand pair of $\GL(n,\Fq)$ and its parabolic subgroup.
By the general theory of Gelfand pair (see \cite[VII.1]{M} and \cite[II.11]{BI} for example),
we have the associated zonal spherical functions.
In the next \cref{ss:q:pos}, we will use these functions to 
establish the equality between our function $p(x;q)$ 
and a positive quantity $\wt{p}(x;q)$ arising from a certain $\GL(n,\Fq)$-representation, 
and to prove the positivity $p(x;q) \ge 0$. 
The symbols used below are borrowed from \cite[VII.1, Exercise 14]{M}.

Let $\clV$ be a vector space of dimension $n$ over the finite field $\Fq$ of order $q$,
and $\clU \subset \clV$ be a subspace of dimension $m$ with $m \le n-m$.
We consider the following pair of finite groups:
\begin{align}
 G \ceq \GL(\clV) \, \supset \, K \ceq \{ g \in G \mid g \, \clU = \clU\}.
\end{align}
The group $G$ is isomorphic to the general linear group $\GL(n,\Fq)$ over $\Fq$,
and $K$ is isomorphic to the maximal parabolic subgroup 
$P(m,n-m,\Fq) \subset \GL(n,\Fq)$ consisting of block-upper-triangular matrices
$\begin{sbm}* & * \\ 0 & *\end{sbm}$ with block sizes $m$ and $n-m$.
We denote by $1_K$ the trivial representation of $K$, 
and by $1_K^G \ceq \Ind_K^G 1_K = \bbC[G] \otimes_{\bbC[K]}1_K$ 
the induced representation with coefficients in $\bbC$.
The latter is isomorphic to the permutation representation of $G$ on the Grassmannian 
\begin{align}
 G/K \cong \Gr(m,n) \ceq \{\clW \subset \clV \mid \text{linear subspaces}, \, \dim_{\Fq} \clW = m\}.
\end{align}
Recall that the number of elements is given by the $q$-binomial coefficient \eqref{eq:q:qbinom}:
\begin{align}\label{eq:q:G/K}
 \abs{G/K} = \frac{\abs{G}}{\abs{K}} = \abs{\Gr(m,n)} = \qbinom{n}{m}{q}.
\end{align}

The pair $(G,K)$ is a Gelfand pair by \cite[VII.1, Exercise 14]{M}.
In other words, the induced representation $1_K^G$ is multiplicity-free.
According to loc.\ cit., the irreducible decomposition is given by
\begin{align}\label{eq:q:1_K^G}
 1_K ^G = \sum_{x=0}^m V_x, 
\end{align}
and the irreducible $G$-representation ($G$-irrep for short) $V_x$ 
has the dimension
\begin{align}\label{eq:q:dimV_x}
 \dim_{\bbC} V_x 
&= \frac{q^x \, [n]_q!}{\prod_{\square \in (n-x,x)} [h(\square)]_q}
 = q^x \frac{[n-2x+1]_q}{[n-x+1]_q} \qbinom{n}{x}{q}.
\end{align}
Here the index $\square$ of the product runs over the boxes in the Young diagram $(n-x,x)$, 
and $h(\square)$ denotes the hook length of the box $x$.
We will not give an explicit definition of the representation $V_x$.
Let us only mention the character formula of $V_x$:
It has an explicit formula using the Schur symmetric function $s_{(n-x,x)}(y)$
associated to the Young diagram $(n-x,x)$ with a certain infinite family of variables $y$.
See \cite[VII.1, Exercise 14]{M} for the detail 
and \cite[Chap.\ IV]{M} for the theory of irreducible characters of $G=\GL(n,\Fq)$.
In the discussion below, we only use the fact that the $G$-irreps $V_0,\dotsc,V_m$ 
appear in the decomposition \eqref{eq:q:1_K^G}, 
and that they have the dimension \eqref{eq:q:dimV_x}.

By \cite[VII.1, (1.1)]{M}, the multiplicity-freeness of $1_K^G$ is equivalent to 
the commutativity of the Hecke algebra $\clC(G,K)$, which is the vector space
$\clL(K \bs G /K)$ of $K$-bi-invariant functions on $G$
equipped with the convolution product $*$.
For the irrep $V_x$ 
appearing in the decomposition \eqref{eq:q:1_K^G},
the zonal spherical function $\omega_x$ associated to $V_x$ is defined as 
\begin{align}
 \omega_x \ceq \ol{\chi}_x * e_K,
\end{align}
where $\ol{\chi}_x \ceq \ol{\chi}_{V_x}$ is 
the complex conjugate of the character of $V_x$,
and $e_K \ceq \frac{1}{\abs{K}}\sum_{k \in K}k$ is the idempotent associated to the subgroup $K$.
We also have $\omega_x \in \clC(G,K) \cap \clV_x$ by \cite[VII.1, (1.4)]{M}.

For $i=0,1,\dotsc,m$, we set 
\begin{align}\label{eq:q:K_i}
 K_i \ceq \{ g \in G \mid \dim_{\Fq}(\clU \cap g \, \clU) = m-i\}.
\end{align}
In particular, we have $K_0=K$.
These subsets $K_i \subset G$ form a complete system of representatives for 
the double cosets of $K$ in $G$, i.e., $K \bs G / K = \{K_0,K_1,\dotsc,K_m\}$.
By \cite[VII.1, Exercise 14]{M}, the number $\abs{K_i}/\abs{K}$ is given as 
\begin{align}\label{eq:q:K_i/K}
 \frac{\abs{K_i}}{\abs{K}} = q^{i^2} \qbinom{m}{i}{q} \qbinom{n-m}{i}{q}.
\end{align}
Let us cite the proof from loc.\ cit.\ for later use.
It is enough to count the number of $m$-dimensional subspaces $\clW \subset \clV$ 
such that $\dim_{\Fq}(\clU \cap \clW)=m-i$.
Let $\clE \ceq \clU \cap \clW$ and $\clF \ceq \clU+\clW$, 
so that $\clE$ is an $(m-i)$-dimensional subspace of $\clU$
and $\clF$ is an $(m+i)$-dimensional subspace of $\clV$.
Since $\clW/\clE$ is the algebraic complement of $\clU/\clE$ in $\clF/\clE$, 
a choice for $\clW$ corresponds to a choice of $(\clE,\clF,f)$, 
where $f \colon \clW/\clE \to \clU/\clE$ is a linear map.
Since $\dim_{\Fq}(\clW/\clE)=\dim_{\Fq}(\clU/\clE)=i$, we have \eqref{eq:q:K_i/K}.

We are now interested in the value of the zonal spherical function $\omega_x$ 
at elements in $K_i$, which is denoted by
\begin{align}
 \omega(x;i;n,m;q) \ceq \omega_x(g), 
 \quad g \in K_i \quad (x,i = 0,1,\dotsc,m).
\end{align}
We have $\omega(x;0;q)=\omega_x(e)=1$ by \cite[VII.1, (1.4)]{M}.

An explicit formula for the value $\omega(x;i;q)$ is given by Delsarte \cite{De1,De2}.
We explain it below, following the presentation by Dunkl \cite{Du2} with some modification.
Let $Q_d$ be the $q$-Hahn polynomial, which is an orthogonal polynomial 
defined by the basic hypergeometric series $\qhg{3}{2}$ as 
\begin{align}\label{eq:q:qHahn}
 Q_d(q^{-z};\alpha,\beta,D;q) \ceq 
 \qHG{3}{2}{q^{-z},q^{-d},\alpha \beta q^{d+1}}{\alpha q,q^{-D}}{q}{q} \quad
 (D \in \bbN, \ d=0,1,\dotsc,D).
\end{align}
We followed \cite[\S 14.6]{KLS} for the notation.
Then, for $m,n \in \bbZ$ with $0 \le m \le n-m$ and for $x,i=0,1.\dotsc,m$, we have
\begin{align}\label{eq:q:omega}
 \omega(x;i;n,m;q) = Q_x(q^{-i};q^{-m-1},q^{m-n-1},q^{n-m};q) 
 = \qHG{3}{2}{q^{-i},q^{-x},q^{x-n-1}}{q^{-m},q^{m-n}}{q}{q}.
\end{align}

\begin{rmk}
Two remarks on the formula \eqref{eq:q:omega} are in order.
\begin{enumerate}[nosep]
\item
The formula is a $q$-analogue of the formula 
for the value $\omega_{(n-x,x)}(i)$ of the zonal spherical function on the Gelfand pair 
$(\frS_n,\frS_m \times \frS_{n-m})$.
See \cite[III.2]{BI} and \cite[Remark 4.1.3 (2)]{HHY} for the detail.

\item 
In \cite[3.7 Theorem]{Du2}, the formula is given in terms of the $q^{-1}$-Hahn polynomial:
\begin{align}
 \omega(x;i;n,m;q) = 
 (-1)^x q^{\binom{x}{2}-m x} \frac{(q^m;q^{-1})_x}{(q^{n-m};q^{-1})_x}
 \qHG{3}{2}{q^{m-i},q^x,q^{n-m+1}}{q^m,q^m}{q^{-1}}{q^{-1}}.
\end{align}
We can show that it is equivalent to the $q$-Hahn polynomial formula \eqref{eq:q:omega}
with the help of Sears' $\qhg{3}{2}$ transformation formula \cite[\S 3.2]{GR} 
and Heine's $\qhg{2}{1}$ summation formula 
($q$-analogue of Gauss' $\hg{2}{1}$ summation formula) \cite[\S 1.5]{GR}.
The computation is left to the reader. 
\end{enumerate}
\end{rmk}

\subsection{The positivity from \texorpdfstring{$\GL(n,\Fq)$}{GL(n,Fq)}-action} 
\label{ss:q:pos}

As mentioned in the beginning of this \cref{s:q}, 
we prove the positivity of the function $p(x;q)$ using a representation of 
the general linear group $\GL(n,\Fq)$.
The representation space will be the tensor space $T=(\bbC^2)^{\otimes [n]_q}$,
where $[n]_q = 1+q+\dotsb+q^{n-1}$ is the $q$-integer.
Let us start with the definition of this representation.

Let $(n,m,k,l)$ be the parameters belonging to the set $\clN$ in \eqref{eq:0:clN}, 
and $\clV$ be an $n$-dimensional $\Fq$-linear space as in \cref{ss:q:sph}.
We denote by $\bbP^1(\clV)$ the set of lines, or one-dimensional $\Fq$-linear subspaces, in $\clV$.
Note that $\bbP^1(\clV)$ is a finite set with cardinality $\abs{\bbP^1(\clV)}=[n]_q$.

Let $\bbC^2 = \bbC \ket{0} \oplus \bbC \ket{1}$ be the two-dimensional complex linear space
with basis $\ket{0}$ and $\ket{1}$, using bra-ket notation. Consider the tensor space
\begin{align}\label{eq:q:T}
 T \ceq (\bbC^2)^{\otimes \bbP^1(\clV)} = (\bbC^2)^{\otimes [n]_q}.
\end{align}
Here $\bbP^1(\clV)$ is the index set of tensors, and an element $\ket{t} \in T$ will be 
expressed as $\ket{t} = \otimes_{L \in \bbP^1(\clV)} t^{(L)}$.

We introduce a linear basis of $T$ in the following way.
For each linear subspace $\clW \subset \clV$, we define $\ket{e_{\clW}} \in T$ by 
\begin{align}\label{eq:q:eW-dfn}
 \ket{e_{\clW}} = \otimes_{L \in \bbP^1(\clV)} e_{\clW}^{(L)}, \quad 
 e_{\clW}^{(L)} \ceq
 \begin{cases} \ket{1} & (L \subset \clW) \\ \ket{0} & (L \not\subset \clW)\end{cases}.
\end{align}
For example, we have $\ket{e_{\clV}}=\ket{1}^{\otimes \bbP^1(\clV)}$ and 
$\ket{e_O} = \ket{0}^{\otimes \bbP^1(\clV)}$, where $O = \{0\} \subset \clV$ is the zero space.
Then, for linear subspaces $\clW, \clW' \subset \clV$,  we have 
\begin{align}\label{eq:q:e_W}
 \ket{e_{\clW}} = \ket{e_{\clW'}} \iff \bbP^1(\clW) = \bbP^1(\clW') \iff \clW = \clW'.
\end{align}
Here $\bbP^1(\clW)$ denotes the set of lines in $\clW$, 
and we regard it as a subset of $\bbP^1(\clV)$ by the inclusion $\bbP^1(\clW) \inj \bbP^1(\clV)$
induced by the given inclusion $\clW \inj \clV$.
By \eqref{eq:q:e_W}, we see that 
the elements $\ket{e_{\clW}}$ for all subspaces $\clW \subset \clV$ form a linear basis of $T$.

Hereafter we denote $G \ceq \GL(\clV) = \GL(n,\Fq)$.
The natural action of $G$ on $\clV$ induces another one on $\bbP^1(\clV)$,
and it further induces a $G$-action $\varphi$ on $T$ by permutation of tensor factors.
Explicitly, we have 
\begin{align}\label{eq:q:varphi}
 \varphi(g) \ket{e_{\clW}} = \ket{e_{g \clW}} \quad (g \in G).
\end{align}
Thus, we regard $G=\GL(n,\Fq)$ as a subgroup of the symmetric group $\frS_{[n]_q}$ 
which permutes the tensor factor of $T=(\bbC^2)^{\otimes [n]_q}$.

The standard Hermitian form on $\bbC^2$ induces another one 
on the tensor space $T=(\bbC^2)^{\otimes [n]_q}$.
We denote this pairing on $T$ by bra-ket notation.
Then, \eqref{eq:q:e_W} implies
\begin{align}\label{eq:q:pair}
 \pair{e_{\clW}}{e_{\clW'}} = \delta_{\clW,\clW'}, 
\end{align}
and $\ket{e_{\clW}}$ for all $\clW \subset \clV$ form an orthonormal basis 
of the Hermitian space $T$.
Also, by \eqref{eq:q:e_W} and \eqref{eq:q:varphi}, we see that 
the $G$-representation $T$ is unitary with respect to this Hermitian form.

Finishing the introduction of the $G$-representation $T$, 
we turn to the construction of quantities $\wt{p}(x;q) \ge 0$ for $x \in \{0,1,\dotsc,m\}$.
The construction is divided into three steps.

As the first step, we consider the irreducible decomposition of $T$ as a $G$-module:
$T = \bigoplus_{\lambda \in \wh{G}} T_\lambda$, 
where $T_\lambda$ is the isotypical component 
corresponding to the $G$-irrep $V_\lambda$.
That is, we have $T_\lambda = V_\lambda^{\oplus m_\lambda}$, 
where $m_\lambda$ denotes the multiplicity of $V_\lambda$ in $T$.
Now the orthogonal projector $\sfP_\lambda\colon T \srj T_\lambda$ 
for each $\lambda \in \wh{G}$ is expressed as 
\begin{align}\label{eq:q:P_a}
 \sfP_{\lambda;q} = \frac{\dim_{\bbC} V_\lambda}{\abs{G}} 
 \sum_{g \in G} \ol{\chi_\lambda(g)} \varphi(g),
\end{align}
where $\chi_\lambda \ceq \chi_{V_\lambda}$ is the character of $V_\lambda$,
and the overline denotes the complex conjugate.
This formula is an outcome of the fact that 
the element $(\dim T_\lambda/\abs{G}) \ol{\chi_\lambda}$
is a primitive central idempotent in the group algebra $\bbC[G]$.
See \cite[V.33, (33.8)]{CR} for the detail.

The projector $\sfP_{\lambda,q}$ is self-adjoint for the Hermitian form on $T$.
In fact, as explained at \eqref{eq:q:varphi}, the action $\varphi$ permutes 
the elements $\ket{e_{\clW}}$ of the orthogonal basis of $T$.
Hence, the adjoint of $\sum_{g \in G} \ol{\chi_\lambda(g)} \varphi(g^{-1})$ is 
$\sum_{g \in G}\chi_\lambda(g^{-1}) \varphi(g)=\sum_{g \in G}\ol{\chi_\lambda(g)}\varphi(g^{-1})$,
and we have $\sfP_{\lambda,q}^\dagger = \sfP_{\lambda,q}$.
Now, since $\sfP_{\lambda,q}$ is an idempotent, we have for any element $\ket{\psi} \in T$ that 
\begin{align}\label{eq:q:gen-pos}
 \bra{\psi} P_{\lambda,q} \ket{\psi} = \bra{\psi} P_{\lambda,q}^2 \ket{\psi} = 
 (\bra{\psi}P_{\lambda,q}^\dagger) (P_{\lambda,q}\ket{\psi}) \ge 0.
\end{align}

\begin{rmk}[added after the referees' comments]\label{rmk:ref2}
The positivity \eqref{eq:q:gen-pos} is indicated 
by the referees' suggestions in the revise process.
\end{rmk}

As the second step, we choose a distinguished element $\ket{\Xi_{n,m|k,l;q}} \in T$ 
as the above $\ket{\psi}$.
Fix a linear basis $v_1, v_2, \dotsc, v_n$ of $\clV$. 
For each interval $[i,j]=\{i,i+1,\dotsc,j\} \subset \{1,2,\dotsc,n\}$, 
we set $\clV_{[i,j]} \ceq \Fq v_i + \Fq v_{i+1} + \dotsb + \Fq v_j$.
We also denote $\clV_i \ceq \clV_{[1,i]}$.
In particular, we have $\clV=\clV_n=\clV_{[1,n]}$, and $\clV$ is equipped with 
the complete flag $\clV \supset \clV_{n-1} \supset \dotsb \supset \clV_1 \supset \{0\}$.
Then, the subspace 
\begin{align}\label{eq:q:cLV_k^c}
 \clV_k^c \ceq \clV_{[k+1,n]}
\end{align}
is a complement of $\clV_k \subset \clV$.
Now we define an element $\ket{\Xi_{n,m | k,l;q}} \in T$ by
\begin{align}\label{eq:q:Xi}
&\ket{\Xi_{n,m | k,l;q}} \ceq \qbinom{n-k}{m-l}{q}^{-1/2} 
 \sum_{\clW \in \Gr(n,m \midd k,l)} \ket{e_{\clW}}, \\
&\Gr(n,m \midd k,l) \ceq \{ \clW \mid \clV_l \subset \clW \subset \clV, \, 
 \dim_{\Fq} \clW = m, \, \dim_{\Fq}(\clW \cap \clV_k^c)=m-l \}.
\end{align}
The $q$-binomial in \eqref{eq:q:Xi} is the normalization factor.
To see that, consider the correspondence
$\clW \mapsto \clW' \ceq \clW \cap \clV_k^c = \clW \cap \clV_{[k+1,n]}$, 
which induces a bijection
\begin{align}
 \Gr(n,m \midd k,l) \lto 
 \{ \clW' \subset \clV_{[k+1,n]} \mid \dim_{\Fq} \clW' = m-l \},
\end{align}
and the right hand side is isomorphic to the Grassmannian $\Gr(m-l,n-k)$ over $\Fq$.
By \eqref{eq:q:G/K}, we have 
\begin{align}
 \pair{\Xi_{n,m | k,l;q}}{\Xi_{n,m | k,l;q}} = 
 \qbinom{n-k}{m-l}{q}^{-1} \, \abs{\Gr(m-l,n-k)} = 1.
\end{align}

Now we are in the final step.
Recall the $G$-irreducible decomposition $1_K^G = \sum_{x=0}^m V_x$ of the induced module $1_K^G$ 
for the parabolic subgroup $K=P(m,n-m,\Fq)$ in \eqref{eq:q:Xi}.
We denote by $\sfP_{x;q}$ the projector $\sfP_{\lambda;q}$ \eqref{eq:q:P_a} 
for $\lambda \in \wh{G}$ corresponding to the $G$-irrep $V_x$, and define
\begin{align}\label{eq:q:p}
 \wt{p}(x;q) = \wt{p}(x \midd n,m,k,l;q) \ceq
 \bra{\Xi_{n,m|k,l;q}} \sfP_{x;q} \ket{\Xi_{n,m|k,l;q}}
\end{align}
for $x \in \{0,1,\dotsc,m\}$.
By \eqref{eq:q:gen-pos}, we have 
\begin{align}\label{eq:q:pos}
 \wt{p}(x;q) \ge 0.
\end{align}


The main result of this section is:

\begin{thm}[$q$-Hahn presentation]\label{thm:q:qH}
For $(n,m,k,l) \in \clN$ and $x \in \{0,1,\dotsc,m\}$, we have 
\begin{align}
\begin{split}
  \wt{p}(x;q) 
&=q^x \frac{\qbinom{n}{x}{q}}{\qbinom{n}{m}{q}} \frac{[n-2x+1]_q}{[n-x+1]_q} 
  \sum_{i=0}^x q^{i^2} \qbinom{M}{i}{q} \qbinom{N}{i}{q} \omega(x;i;n,m;q) \\
&=q^x \frac{\qbinom{n}{x}{q}}{\qbinom{n}{m}{q}} \frac{[n-2x+1]_q}{[n-x+1]_q}
  \sum_{i=0}^x q^{i^2} \qbinom{M}{i}{q} \qbinom{N}{i}{q} 
  \qHG{3}{2}{q^{-i},q^{-x},q^{x-n-1}}{q^{-m},q^{m-n}}{q}{q}
\end{split}
\end{align}
with  $M \ceq m-l$ and $N \ceq n-m-k+l$.
\end{thm}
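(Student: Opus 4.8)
The plan is to compute $\wt p(x;q)=\bra{\Xi_{n,m|k,l;q}}\sfP_{x;q}\ket{\Xi_{n,m|k,l;q}}$ by expanding $\sfP_{x;q}$ via the character formula \eqref{eq:q:P_a} and exploiting the fact that $\varphi(g)$ merely permutes the orthonormal basis $\{\ket{e_\clW}\}$. Writing $\ket{\Xi}=\qbinom{n-k}{m-l}{q}^{-1/2}\sum_{\clW\in\Gr(n,m\midd k,l)}\ket{e_\clW}$, we get
\[
 \wt p(x;q)=\frac{\dim_\bbC V_x}{\abs{G}\,\qbinom{n-k}{m-l}{q}}
 \sum_{g\in G}\ol{\chi_x(g)}\;
 \#\{(\clW,\clW')\in\Gr(n,m\midd k,l)^2 \mid g\,\clW=\clW'\}.
\]
First I would reorganize the double sum: fix $\clW$, sum over $g$ with $g\clW\in\Gr(n,m\midd k,l)$, then observe that by transitivity of $G$ on $\Gr(m,n)$ and the fact that $\ol{\chi_x}*e_K=\omega_x$ is the zonal spherical function, the inner sum collapses onto the value $\omega_x$ at the double coset determined by $\dim_{\Fq}(\clW\cap\clW')$. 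Concretely, for $\clW,\clW'$ both in $\Gr(m,n)$, one has $\frac{1}{\abs K}\sum_{g:\,g\clW=\clW'}\ol{\chi_x(g)}=\omega_x(\text{any }g_0\text{ with }g_0\clW=\clW')=\omega(x;i;n,m;q)$ where $i=m-\dim_{\Fq}(\clW\cap\clW')$, using that $V_x$ is a constituent of $1_K^G$ and the standard identity $\bra{e_{\clW}}\sfP_{x;q}\ket{e_{\clW'}}=\frac{\dim V_x}{\abs{G/K}}\,\omega_x(g_0)$ (cf.\ \cite[VII.1]{M}). Combined with \eqref{eq:q:G/K} and \eqref{eq:q:dimV_x}, the prefactor $\dim_\bbC V_x/\abs{G/K}$ becomes exactly $q^x\frac{[n-2x+1]_q}{[n-x+1]_q}\frac{\qbinom{n}{x}{q}}{\qbinom{n}{m}{q}}$, which is the prefactor in the claimed formula.

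What remains is the combinatorial identity
\[
 \frac{1}{\qbinom{n-k}{m-l}{q}}\,\#\{(\clW,\clW')\in\Gr(n,m\midd k,l)^2\mid \dim_{\Fq}(\clW\cap\clW')=m-i\}
 = q^{i^2}\qbinom{M}{i}{q}\qbinom{N}{i}{q},
\]
with $M=m-l$, $N=n-m-k+l$. I would prove this by the same complement-and-linear-map parametrization used in \cref{ss:q:sph} for \eqref{eq:q:K_i/K}, but now carried out inside the constrained family $\Gr(n,m\midd k,l)$: fix $\clW$, and count $\clW'\in\Gr(n,m\midd k,l)$ with $\dim_{\Fq}(\clW\cap\clW')=m-i$. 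Using the bijection $\clW\mapsto\clW\cap\clV_k^c$ onto $\Gr(m-l,n-k)$ noted at \eqref{eq:q:Xi}, both $\clW,\clW'$ are determined by $l$-dimensional "fixed part" $\clV_l$ plus a subspace of $\clV_{[k+1,n]}$; one checks the intersection condition decomposes according to how the $(m-l)$-dimensional pieces inside $\clV_{[k+1,n]}\cong\Fq^{n-k}$ meet, which by homogeneity counts (after dividing by the normalization $\qbinom{n-k}{m-l}{q}$) as $q^{i^2}$ times a product of two Gaussian binomials with top rows $M$ and $N$. Finally, substituting \eqref{eq:q:omega} for $\omega(x;i;n,m;q)$ gives the second displayed equality, and comparison with \cref{thm:sum1} identifies $\wt p(x;q)=p(x;q)$.

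The main obstacle is the constrained count: verifying that, after dividing by $\qbinom{n-k}{m-l}{q}$, the number of pairs $(\clW,\clW')\in\Gr(n,m\midd k,l)^2$ with prescribed $\dim_{\Fq}(\clW\cap\clW')$ is precisely $q^{i^2}\qbinom{M}{i}{q}\qbinom{N}{i}{q}$. The subtlety is that the fixed flag pieces $\clV_l\subset\clV_k$ impose that the two "free" parts live in $\clV_{[k+1,n]}$, but their intersection inside $\clV$ can also pick up contributions forced by $\clV_l$; one must show the relevant parameters split into exactly two independent Gaussian-binomial choices (of dimensions $M$ and $N$) with a $q^{i^2}$ weight coming from the choice of a complementary linear map, exactly mirroring the proof of \eqref{eq:q:K_i/K} but now with the roles of $m$ and $n-m$ replaced by $M$ and $N$. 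Everything else — the character/idempotent manipulation and the reduction to $\omega_x$ — is standard Gelfand-pair bookkeeping, and the identification with $p(x;q)$ is immediate from \cref{thm:sum1}.
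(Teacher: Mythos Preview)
Your proposal is correct and follows essentially the same route as the paper: expand $\sfP_{x;q}$ via characters, reduce the inner sum to the zonal spherical function value $\omega(x;i;n,m;q)$ on the double coset determined by $i=m-\dim_{\Fq}(\clW\cap\clW')$, and then count pairs $(\clW,\clW')$ with prescribed $i$ by the same mechanism as \eqref{eq:q:K_i/K}. The ``obstacle'' you flag is in fact benign: since every $\clW\in\Gr(n,m\midd k,l)$ decomposes as $\clV_l\oplus(\clW\cap\clV_k^c)$ with $\clV_l\cap\clV_k^c=0$, one has $\clW\cap\clW'=\clV_l\oplus\bigl((\clW\cap\clV_k^c)\cap(\clW'\cap\clV_k^c)\bigr)$, so the constrained count reduces exactly to \eqref{eq:q:K_i/K} for $\Gr(M,M+N)$ inside $\clV_k^c$, giving $q^{i^2}\qbinom{M}{i}{q}\qbinom{N}{i}{q}$ on the nose.
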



\begin{proof}
In this proof, we abbreviate 
$[n] \ceq [n]_{q}$ and $\qbinom{n}{m}{} \ceq \qbinom{n}{m}{q}$.
Denote $G \ceq \GL(\clV) \cong \GL(n,\bbF_q)$.
By \eqref{eq:q:Xi}, \eqref{eq:q:P_a} and \eqref{eq:q:pair}, we have 
\begin{align}
 \wt{p}(x;q) 
&=\bra{\Xi_{n,m|k,l;q}} \sfP_{x;q} \ket{\Xi_{n,m|k,l;q}} 
\notag \\
&=\frac{\dim_{\bbC} V_x}{\abs{G}} \qbinom{n-k}{m-l}{}^{-1} \sum_{g \in G} \chi_x(g)
  \sum_{\clW,\clW' \in \Gr(n,m|k,l)} \bra{e_{\clW}} \varphi(g) \ket{e_{\clW'}} 
\notag \\
&=\frac{\dim_{\bbC} V_x}{\abs{G}} \qbinom{n-k}{m-l}{}^{-1} 
  \sum_{\clW,\clW' \in \Gr(n,m|k,l)} \sum_{g \in G} \chi_x(g) \delta_{\clW, g \clW'}.
\label{eq:q:XPX1}
\end{align}
For a while, we fix $\clW,\clW' \in \Gr(n,m|k,l)$, and study the elements $g \in G$
which are effective in the summation.
Take $g_0 \in G$ satisfying the following conditions:
\begin{align}\label{eq:q:g_0}
 \rst{g_0}{\clV_k} = \id, \quad \rst{g_0}{\clW'} \colon \clW' \lsto \clW, \quad 
 \rst{g_0}{{\clW'}^c \cap \clV_{[k+1,n]}} \colon 
 {\clW'}^c \cap \clV_{[k+1,n]} \lsto \clW^c \cap \clV_{[k+1,n]}.
\end{align}
We also set 
\begin{align}
 P(\clW) \ceq \{g \in G \mid g \clW = \clW\},
\end{align}
which is a maximal parabolic subgroup of $G$ isomorphic to $P(m,n-m,\Fq)$.
Then, we find that 
\begin{align}
 \delta_{\clW,g \clW'} \neq 0 \iff g g_0^{-1} \in P(\clW).
\end{align}
Next, recalling that we choose $\clU=\clV_m$, we take $\sigma_0 \in G$ 
satisfying the following conditions:
\begin{align}\label{eq:q:sigma_0}
\begin{split}
&\rst{\sigma_0}{\clV_l} = \id, \quad \rst{\sigma_0}{\clU} \colon \clU \lsto \clW, \quad
 \sigma_0(v_{m+j}) = v_j \ (j=1,\dotsc,k-l), \\
&\rst{\sigma_0}{\clV_{[m+k-l+1,n]}} \colon 
 \clV_{[m+k-l+1,n]} \lsto \clW^c \cap \clV_{[k+1,n]}.
\end{split}
\end{align}
Here we used the fixed basis $v_1,\dotsc,v_n$ of $\clV$.
Then, we have 
\begin{align}
 \delta_{\clW,g \clW'} \neq 0 \iff \sigma_0^{-1} g g_0^{-1} \sigma_0 
 \in \sigma_0^{-1} P(\clW) \sigma_0 = P(\clU) = K,
\end{align}
and \eqref{eq:q:XPX1} is continued as 
\begin{align}\label{eq:q:XPX2}
 \wt{p}(x;q)
&= \frac{\dim_{\bbC} V_x}{\abs{G}} \qbinom{n-k}{m-l}{}^{-1} 
 \sum_{\clW,\clW' \in \Gr(n,m|k,l)} 
 \sum_{k \in K} \chi_x(\sigma_0 k \sigma_0^{-1} g_0).
\end{align}
Thus, we want to calculate 
$\chi_x(\sigma_0 k \sigma_0^{-1} g_0)
=\chi_x\bigl(k (\sigma_0^{-1}g_0^{-1}\sigma_0)^{-1}\bigr)$.
We set 
\begin{align}
 s \ceq \dim_{\Fq}(\clW \cap \clW' \cap \clV_l^c), \quad 
 i \ceq m-l-s = m - \dim_{\Fq}(\clW \cap \clW').
\end{align}
Then, noting that $\clU \cap \clV_l^c=\clV_{[l+1,m]}$, we can find from the conditions
\eqref{eq:q:g_0} and \eqref{eq:q:sigma_0} that the element 
$\sigma \ceq \sigma_0^{-1}g_0^{-1}\sigma_0$ satisfies
\begin{align}
 \dim_{\Fq} \bigl(\sigma(\clV_{[l+1,m]}) \cap \clV_{[l+1,m]}\bigr) = s, \quad
 \rst{\sigma}{\clV_l} = \id, \quad \rst{\sigma}{\clV_{[m+1,m+k-l]}} = \id.
\end{align}
Hence, there exist some $k',k'' \in K=P(\clU)$ such that 
\begin{align}
 \sigma = k' h k'', \quad 
 h \ceq g(l+1,m+k-l+1) \, g(l+2,m+k-l+2) \dotsm g(l+i,m+k-l+i),
\end{align}
where $g(l+j,m+k-l+j) \in G$ is the reflection mapping 
the basis vector $v_{l+j}$ to $v_{m+k-l+j}$ and vice-a-versa.
The element $h$ is independent of $\clW,\clW'$, 
and belongs to $K_i$ given in \eqref{eq:q:K_i}.
Then, using the formula \eqref{eq:q:omega} we have 
\begin{align}
 \sum_{k \in K} \chi_x(\sigma_0 k \sigma_0^{-1} g_0) = 
 \sum_{k \in K} \chi_x(k {k''}^{-1} h {k'}^{-1}) = 
 \sum_{k \in K} \chi_x(k h) = \abs{K} \omega(x;i;n,m;q).
\end{align}
The computation \eqref{eq:q:XPX2} is now continued as 
\begin{align}
 \wt{p}(x;q) = 
 \frac{\dim_{\bbC} V_x}{\abs{G}} \qbinom{n-k}{m-l}{}^{-1} \sum_{i=0}^x \, 
 \sum_{\substack{\clW,\clW' \in \Gr(n,m|k,l), \\ m-\dim(W \cap W')=i}} 
 \abs{K} \omega(x;i;n,m;q).
\end{align}
Note that the final term is independent of $\clW$ and $\clW'$.
Then, a similar argument as in \eqref{eq:q:K_i/K} yields
\begin{align}
\begin{split}
&\abs{\{(\clW,\clW') \in \Gr(n,m \midd k,l)^2 \mid m-\dim_{\Fq}(\clW \cap \clW')=i\}} \\
&= \abs{\Gr(n,m|k,l)} \cdot q^{i^2} \qbinom{m-l}{i}{} \qbinom{n-m-k+l}{i}{}
 = \qbinom{n-k}{m-l}{} q^{i^2} \qbinom{M}{i}{} \qbinom{N}{i}{}
\end{split}
\end{align}
with $M \ceq m-l$ and $N \ceq n-m-k+l$.
Then, \eqref{eq:q:G/K} and the dimension formula \eqref{eq:q:dimV_x} yield
\begin{align}
\begin{split}
 \wt{p}(x;q) 
&= \frac{\abs{K}}{\abs{G}} \dim_{\bbC} V_x 
 \sum_{i=0}^x q^{i^2} \qbinom{M}{i}{} \qbinom{N}{i}{} \omega(x;i;n,m;q) \\
&= \qbinom{n}{m}{}^{-1} q^x \frac{[n-2x+1]}{[n-x+1]} \qbinom{n}{x}{}
 \sum_{i=0}^x q^{i^2} \qbinom{M}{i}{} \qbinom{N}{i}{} \omega(x;i;n,m;q).
\end{split}
\end{align}
\end{proof}

Now we prove \cref{thm:main} \eqref{i:main:1}.
By \cref{thm:sum1} and \cref{thm:q:qH}, we have $\wt{p}(x;q) = p(x;q)$.
Since $\wt{p}(x;q) \ge 0$ by \eqref{eq:q:pos}, we have $p(x;q) \ge 0$.
Then, since $\sum_{x=0}^m p(x;q)=1$ by \cref{thm:q:sum=1}, 
the function $p(x;q)$ is the pmf of a discrete probability distribution on $\{0,1,\dotsc,m\}$.
The cdf is given by the $\qhg{4}{3}$-series in \cref{thm:q:sum=1}. 
The proof of \cref{thm:main} \eqref{i:main:1} is now finished.

\begin{rmk}[added after referee's comments]\label{rmk:ref3}
The positivity $p(x;q) \ge 0$ implies the following remarks.
\begin{enumerate}
\item 
The $q$-Racah parameters in the function $p(x;q)$ lie outside the region of orthogonality. 
In fact, a non-constant polynomial in a family of orthogonal polynomials must 
take on some negative values on a finite point-set domain of orthogonality, 
but we have $p(x;q) \ge 0$ for $x\in\{0,1,\dotsc,m\}$.

\item
For a fixed $(x,n,m,k,l)$, the value $p(x;q)$ is rational in $q$
and positive on the infinite sequence $q \to \infty$.
Hence, it must be eventually positive 
since it can have only a finite number of sign-changes.
Therefore, $p(x;q)$ is the pmf of a discrete probability distribution on $\{0,1,\dotsc,m\}$
for sufficiently large $q$.
\end{enumerate}
\end{rmk}

\section{The case \texorpdfstring{$q=1$}{q=1}}
\label{s:q=1}

The purpose of this subsection is to prove \cref{thm:main} \eqref{i:main:2}.
First, we give the exact statement in \cref{thm:q=1:main}.

\subsection{The limit form under \texorpdfstring{$q \to 1$}{q to 1}}

As before, we consider the parameter set
\begin{align}
 \clN \ceq \{(n,m,k,l) \in \bbN^4 \mid m \le \fl{n/2}, \ k \le n, \ m+k-n \le l \le m \wedge k\}.
\end{align}
The function $p(x;q)=p(x \midd n,m,k,l;q)$ in \cref{dfn:0:p-genq} 
has a well-defined limit under $q \to 1$:
\begin{align}\label{eq:q=1:p}
\begin{split}
  p(x)
&=p(x \midd n,m,k,l) \ceq \lim_{q \to 1}p(x \midd n,m,k,l;q) \\
&=\binom{n-k}{m-l} \frac{\binom{n}{x}}{\binom{n}{m}} \frac{n-2x+1}{n-x+1} 
  \HG{4}{3}{-x,x-n-1,-M,-N}{-m,m-n,-M-N}{1}
\end{split}
\end{align}
with $M \ceq m-l$ and $N \ceq n-m-k+l$.
Here we used the standard symbols of binomial coefficients and 
hypergeometric series (see \cref{ss:0:ntn}).
We have $p(x) \in \bbQ$.

The terminating $\hg{4}{3}$-series in \eqref{eq:q=1:p} is Racah polynomial \cite[\S9.2]{KLS}.
In the definition \eqref{eq:qRacah} of $q$-Racah polynomial,
we replace $a=q^\alpha$, $b=q^\beta$, $c=q^\gamma$, $d=q^\delta$ and take the limit $q \to 1$.
Then we obtain Racah polynomial
\begin{align}
 R_s(\lambda(y);\alpha,\beta,\gamma,\delta) \ceq
 \HG{4}{3}{-s,s+\alpha+\beta+1,-y,y+\gamma+\delta+1}
          {\alpha+1,\beta+\delta+1,\gamma+1}{1}
\end{align}
with $\lambda(y) \ceq (y+\gamma+\delta+1)$.
The limit function $p(x)$ \eqref{eq:q=1:p} can be rewritten as 
\begin{align}\label{eq:q=1:p=R}
 p(x) = \binom{n-k}{m-l} \frac{\binom{n}{x}}{\binom{n}{m}} \frac{n-2x+1}{n-x+1} \, 
 R_x(\lambda(M); -m-1,-n+m-1,-n+k-1,0).
\end{align}
We call \eqref{eq:q=1:p=R} the Racah presentation of $p(x)$.

The formula for $p(x;q)$ in \cref{thm:sum1} has the following $q \to 1$ limit.

\begin{thm}[Hahn presentation] 
For $(n,m,k,l) \in \clN$ and $x \in \{0,1,\dotsc,m\}$, we have 
\begin{align}\label{eq:q=1:4332}
 p(x) = \frac{\binom{n}{x}}{\binom{n}{m}} \, \frac{n-2x+1}{n-x+1} \, 
 \sum_{i=0}^{M \wedge N} \binom{M}{i} \binom{N}{i} \, \HG{3}{2}{-i,-x,x-n-1}{-m,m-n}{1}.
\end{align}
The terminating $\hg{3}{2}$-series in \eqref{eq:q=1:sph-omega} 
is Hahn polynomial \cite[\S9.5]{KLS}.
\end{thm}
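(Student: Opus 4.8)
The statement to prove is the $q\to1$ degeneration of \cref{thm:sum1}, namely formula \eqref{eq:q=1:4332}. The cleanest route is simply to take the limit $q\to1$ termwise in the identity \eqref{eq:q:p=qR} of \cref{thm:sum1}, which holds as an equality in $\bbQ(q)$. First I would recall that for a terminating series the $q$-hypergeometric factors degenerate to ordinary hypergeometric ones: $(q^a;q)_r/(1-q)^r \to (a)_r$ up to sign, and more precisely $\lim_{q\to1}\qHG{3}{2}{q^{-i},q^{-x},q^{x-n-1}}{q^{-m},q^{m-n}}{q}{q} = \HG{3}{2}{-i,-x,x-n-1}{-m,m-n}{1}$, since all the ``extra'' powers of $q$ coming from the $(1+s-r)$-exponent in \eqref{eq:qntn:qhg} vanish because here $r=3$, $s=2$, so the balancing exponent is $0$. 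Similarly $\lim_{q\to1}q^{i^2}=1$, $\lim_{q\to1}\qbinom{M}{i}{q}=\binom{M}{i}$, $\lim_{q\to1}q^x[n-2x+1]_q/[n-x+1]_q=(n-2x+1)/(n-x+1)$, and $\lim_{q\to1}\qbinom{n}{x}{q}/\qbinom{n}{m}{q}=\binom{n}{x}/\binom{n}{m}$.

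The one point that needs a word of care is the range of the inner summation. In \eqref{eq:q:p=qR} the sum runs $i=0,\dotsc,x$, but the summand contains $\qbinom{M}{i}{q}\qbinom{N}{i}{q}$, which vanishes for $i>M$ or $i>N$; hence the effective range is $i=0,\dotsc,x\wedge M\wedge N$. In the stated formula \eqref{eq:q=1:4332} the range is written as $i=0,\dotsc,M\wedge N$; these agree because for $M\wedge N\ge i>x$ the ordinary $\hg{3}{2}$ factor $\HG{3}{2}{-i,-x,x-n-1}{-m,m-n}{1}$ is itself a terminating series in the variable paired with $-x$, but more simply: one checks that whenever $i>x$ the contribution is zero in the original $q$-identity as well (since $x\le m$ forces the relevant terminating structure), so no terms are lost or gained in the limit. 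I would state this reconciliation of ranges explicitly as the only non-formal step.

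Concretely, the steps in order are: (i) invoke \cref{thm:sum1}, which gives \eqref{eq:q:p=qR} as an identity in $\bbQ(q)$, valid in a punctured neighbourhood of $q=1$; (ii) observe that both sides are rational in $q$ with no pole at $q=1$ — the left side because $p(x;q)$ is a polynomial combination of $q$-binomials and a terminating $\qhg{4}{3}$, the right side because each summand is manifestly regular at $q=1$ — so the limit may be computed termwise; (iii) apply the elementary limits listed above to each factor; (iv) reconcile the summation ranges as discussed. Then the left side tends to $p(x)$ by the very definition \eqref{eq:q=1:p}, and the right side tends to the claimed expression, proving \eqref{eq:q=1:4332}. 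The identification of the resulting $\hg{3}{2}$ with the Hahn polynomial of \cite[\S9.5]{KLS} is then just matching parameters in the standard normalization, exactly parallel to the $q$-Hahn identification in \eqref{eq:q:qHahn}.

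The main obstacle is genuinely minor: it is the bookkeeping in step (iv), making sure that passing to the limit does not silently change which terms are present, together with keeping track of the $(1-q)$-powers so that the degeneration of the $\qhg{3}{2}$ to $\hg{3}{2}$ comes out with the correct (trivial) prefactor rather than with a stray power of $(1-q)$ or a sign. Since the relevant series is $0$-balanced in the $q$-setting, that prefactor is indeed $1$, so no rescaling of the variable $z=q$ is needed; I would simply remark this and point to \eqref{eq:qntn:qhg}. There is no deep content here — the theorem is a limit of an already-proved identity — and the proof is short.
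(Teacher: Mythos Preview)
Your approach is exactly the paper's: the Hahn presentation at $q=1$ is stated immediately after the sentence ``The formula for $p(x;q)$ in \cref{thm:sum1} has the following $q\to1$ limit'', with no separate proof. So steps (i)--(iii) of your plan are precisely what is intended, and the termwise limits you list are all correct.

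However, your step (iv) contains a genuine error. You claim that for $x<i\le M\wedge N$ the summand vanishes, but this is false. Take $x=0$: then the ${}_3F_2$ (and its $q$-analogue) has the numerator parameter $-x=0$, hence equals $1$ for \emph{every} $i$, while $\binom{M}{i}\binom{N}{i}\ne 0$ for $0<i\le M\wedge N$. Thus the sums $\sum_{i=0}^{0}$ and $\sum_{i=0}^{M\wedge N}$ differ by $\binom{M+N}{M}-1$, which is nonzero whenever $M,N\ge 1$. So the two ranges cannot be reconciled by the argument you give.

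The resolution is that the upper limit ``$x$'' in the displayed statement of \cref{thm:sum1} is a misprint: the proof of that theorem actually establishes the identity \eqref{eq:q:qR} with the sum running over $i=0,\dotsc,m$ (look at the left-hand side of \eqref{eq:q:qR} and the subsequent manipulations, where the summation index $i$ is never truncated at $x$). Since $\qbinom{M}{i}{q}\qbinom{N}{i}{q}=0$ for $i>M\wedge N$ and $M\le m$, the effective range in the $q$-identity is already $0\le i\le M\wedge N$. With this correction, the $q\to1$ limit gives \eqref{eq:q=1:4332} directly and no reconciliation of ranges is needed at all. Replace your step (iv) by this observation and the proof goes through.
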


The $q \to 1$ limit of \cref{thm:q:sum=1} yields the following summation formula of $p(x)$.

\begin{thm}\label{thm:q=1:sum=1}
For $(n,m,k,l) \in \clN$ and $x \in \{0,1,\dotsc,m\}$, we have 
\begin{align}\label{eq:q=1:cdf}
 \sum_{u=0}^x p(u) = 
 \binom{n-k}{m-l} \frac{\binom{n}{x}}{\binom{n}{m}}
 \HG{4}{3}{-x,x-n,-M,-N}{-m,m-n,-M-N}{1}.
\end{align}
We also have $\sum_{u=0}^m p(u) = 1$.
\end{thm}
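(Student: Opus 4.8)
The statement is obtained purely as the $q\to 1$ specialization of \cref{thm:q:sum=1}. The plan is to first justify that every term appearing in the formula of \cref{thm:q:sum=1} has a well-defined limit as $q\to 1$, and then identify those limits with their classical (non-$q$) counterparts. Concretely, I would start from the identity \eqref{eq:q:s(y)}, namely
\begin{align*}
 \sum_{u=0}^x p(u;q) = \qbinom{n-k}{m-l}{q} \frac{\qbinom{n}{x}{q}}{\qbinom{n}{m}{q}}
 \qHG{4}{3}{q^{-x},q^{x-n},q^{-M},q^{-N}}{q^{-m},q^{m-n},q^{-M-N}}{q}{q},
\end{align*}
which is an equality in $\bbQ(q)$, hence valid at $q=1$ once we check both sides extend continuously. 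The left-hand side tends to $\sum_{u=0}^x p(u)$ by the definition \eqref{eq:q=1:p} of $p(u)$ as $\lim_{q\to1}p(u;q)$ (a finite sum of limits). For the right-hand side, recall $\lim_{q\to1}\qbinom{a}{b}{q}=\binom{a}{b}$ and, for a terminating series, $\lim_{q\to1}(q^{-a};q)_i/(q;q)_i = \binom{a}{i}(-1)^i\big/i!\cdot(\text{power of }q)$-type factors collapse so that $\lim_{q\to1}\qHG{4}{3}{q^{-x},q^{x-n},q^{-M},q^{-N}}{q^{-m},q^{m-n},q^{-M-N}}{q}{q} = \HG{4}{3}{-x,x-n,-M,-N}{-m,m-n,-M-N}{1}$, matching the right side of \eqref{eq:q=1:cdf}.

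The one point that requires genuine care is that the $\qhg{4}{3}$-series here is \emph{terminating} — the numerator parameter $q^{-x}$ forces the sum to run only over $0\le i\le x$ — so the limit $q\to1$ is a limit of a fixed finite sum of rational functions of $q$, each of which is regular at $q=1$ (the potentially problematic factors $(q;q)_i$ in the denominator are cancelled against matching factors, or one simply observes each summand $\frac{(q^{-x},q^{x-n},q^{-M},q^{-N};q)_i}{(q^{-m},q^{m-n},q^{-M-N},q;q)_i}q^i$ is a ratio of products of terms $1-q^{j}$ over terms $1-q^{j'}$ with no pole at $q=1$ because, for the relevant range of $i$, none of the upper-parameter exponents hits a value that would make a denominator factor vanish — this is exactly where $(n,m,k,l)\in\clN$ and $i\le x\le m$ are used). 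Thus termwise passage to the limit is legitimate and gives $\frac{(-x)_i\cdots}{\cdots}$-type coefficients, i.e.\ the classical $\hg{4}{3}$. The same reasoning handles the $q$-binomial prefactors, giving \eqref{eq:q=1:cdf}.

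Finally, the normalization $\sum_{u=0}^m p(u)=1$ follows by setting $x=m$ in \eqref{eq:q=1:cdf} and repeating the degeneration already performed in the proof of \cref{thm:q:sum=1}: at $x=m$ the upper parameters $q^{-x}=q^{-m}$ and $q^{x-n}=q^{m-n}$ cancel against two of the lower parameters, the $\qhg{4}{3}$ collapses to $\qHG{2}{1}{q^{-M},q^{-N}}{q^{-M-N}}{q}{q}$, which by the $q$-Chu--Vandermonde formula \eqref{eq:q:qCV} equals $\qbinom{M+N}{M}{q}^{-1}=\qbinom{n-k}{m-l}{q}^{-1}$; letting $q\to1$ (or equivalently using the classical Chu--Vandermonde $\HG{2}{1}{-M,-N}{-M-N}{1}=\binom{M+N}{M}^{-1}$ directly) cancels the prefactor $\binom{n-k}{m-l}$ and leaves $1$. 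The main obstacle, such as it is, is the bookkeeping of the previous paragraph — confirming no spurious pole at $q=1$ in any individual summand — but since every series in sight is terminating, this is routine rather than delicate; no new idea beyond specialization is needed.
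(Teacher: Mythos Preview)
Your proposal is correct and follows exactly the approach the paper takes: the paper simply states that \cref{thm:q=1:sum=1} is ``the $q \to 1$ limit of \cref{thm:q:sum=1}'' without further argument, so your careful verification that the terminating series and $q$-binomials specialize termwise without poles at $q=1$ is, if anything, more detailed than what the paper provides.
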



Now we recall the statement of \cref{thm:main} \eqref{i:main:2}.

\begin{thm}\label{thm:q=1:main}
For $(n,m,k,l) \in \clN$, 
there exists a discrete probability distribution $P_{n,m,k,l}$ on the set $\{0,1,\dotsc,m\}$
whose pmf is given by the function $p(x) = p(x \midd n,m,k,l)$ \eqref{eq:q=1:p}.
The cdf $P_{n,m,k,l}[X \le x]$ is equal to 
the terminating $\hg{4}{3}$-series \eqref{eq:q=1:cdf}.
\end{thm}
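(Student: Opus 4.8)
The plan is to prove \cref{thm:q=1:main} by transporting the entire argument of \cref{s:q} to the $q=1$ setting, replacing $\GL(n,\Fq)$ by the symmetric group $\frS_n$ and the Grassmannian $\Gr(m,n)$ over $\Fq$ by the set of $m$-element subsets of $\{1,\dotsc,n\}$. Concretely, in view of the summation identity $\sum_{x=0}^m p(x)=1$ already established in \cref{thm:q=1:sum=1}, and the fact that the cdf formula \eqref{eq:q=1:cdf} is just the $q\to 1$ specialization of \eqref{eq:q:s(y)}, the only thing left to show is the positivity $p(x)\ge 0$ for $x\in\{0,1,\dotsc,m\}$. Everything else in the statement then follows formally.

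To prove positivity, I would first set up the classical Schur--Weyl analogue of the objects in \cref{ss:q:pos}. Let $[n]=\{1,\dotsc,n\}$, let $T\ceq(\bbC^2)^{\otimes n}$ carry the permutation action $\varphi$ of $\frS_n$ on tensor factors, and for each subset $S\subset[n]$ define $\ket{e_S}\ceq\otimes_{i\in[n]}e_S^{(i)}$ with $e_S^{(i)}=\ket{1}$ if $i\in S$ and $\ket{0}$ otherwise; then $\{\ket{e_S}\}_{S\subset[n]}$ is an orthonormal basis of $T$ permuted by $\varphi$, so $T$ is a unitary $\frS_n$-representation. The subspace spanned by the $\ket{e_S}$ with $\abs{S}=m$ is the permutation module $1^{\frS_n}_{\frS_m\times\frS_{n-m}}$, and its multiplicity-free decomposition $\sum_{x=0}^m V_x$ with $V_x$ the irreducible $\frS_n$-module $S^{(n-x,x)}$ is the classical analogue of \eqref{eq:q:1_K^G}; here $\dim V_x=\binom{n}{x}-\binom{n}{x-1}=\frac{n-2x+1}{n-x+1}\binom{n}{x}$, which is exactly the $q\to1$ limit of \eqref{eq:q:dimV_x}. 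The zonal spherical function of $(\frS_n,\frS_m\times\frS_{n-m})$ has the Hahn-polynomial value $\omega_{(n-x,x)}(i)=\HG{3}{2}{-i,-x,x-n-1}{-m,m-n}{1}$ (the $q\to1$ limit of \eqref{eq:q:omega}; cf.\ \cite[III.2]{BI}). Next I would define $\ket{\Xi}\ceq\binom{n-k}{m-l}^{-1/2}\sum_{S}\ket{e_S}$ where $S$ ranges over the $m$-subsets of $[n]$ containing $[l]=\{1,\dotsc,l\}$ and with $\abs{S\cap\{k+1,\dotsc,n\}}=m-l$; the bijection $S\mapsto S\cap\{k+1,\dotsc,n\}$ onto $(m-l)$-subsets of an $(n-k)$-set shows $\pair{\Xi}{\Xi}=1$. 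Letting $\sfP_x$ be the orthogonal projector onto the $V_x$-isotypic component, I set $\wt p(x)\ceq\bra{\Xi}\sfP_x\ket{\Xi}$; since $\sfP_x$ is self-adjoint (as $\varphi$ permutes an orthonormal basis) and idempotent, $\wt p(x)=\bra{\Xi}\sfP_x^\dagger\sfP_x\ket{\Xi}\ge 0$, exactly as in \eqref{eq:q:gen-pos}.

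The remaining step is the identification $\wt p(x)=p(x)$, which I would carry out by literally running the $q\to 1$ version of the computation in the proof of \cref{thm:q:qH}. Expanding $\sfP_x=\frac{\dim V_x}{n!}\sum_{\sigma\in\frS_n}\chi_x(\sigma)\varphi(\sigma)$ and using $\bra{e_S}\varphi(\sigma)\ket{e_{S'}}=\delta_{S,\sigma S'}$, the double sum over $S,S'$ in $\Gr(n,m\mid k,l)$ collapses, via the same parabolic/double-coset bookkeeping with $K=\frS_m\times\frS_{n-m}$, to $\sum_{i=0}^x\bigl(\text{number of pairs }(S,S')\text{ with }\abs{S\cap S'}=m-i\bigr)\cdot\abs{K}\,\omega_{(n-x,x)}(i)$; the combinatorial count is $\abs{\Gr(n,m\mid k,l)}\binom{M}{i}\binom{N}{i}$, the $q\to1$ shadow of \eqref{eq:q:K_i/K}. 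Collecting constants with $\frac{\abs{K}}{n!}\dim V_x=\binom{n}{m}^{-1}\frac{n-2x+1}{n-x+1}\binom{n}{x}$ gives precisely the right-hand side of \eqref{eq:q=1:4332}, which is $p(x)$ by the Hahn presentation. Alternatively — and this is the cleaner route to state in the paper — one observes that all the quantities involved ($p(x;q)$, $\wt p(x;q)$, the summations) are rational in $q$ and that $\wt p(x;q)=p(x;q)$ holds identically in $\bbQ(q)$ by \cref{thm:sum1} and \cref{thm:q:qH}; so it suffices to show $p(x)=\lim_{q\to1}p(x;q)\ge 0$, and the $\frS_n$-construction above supplies a manifestly non-negative limiting expression. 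I expect the only genuine subtlety to be checking that the $q\to 1$ limits of the representation-theoretic ingredients are the advertised classical ones — in particular that $V_x$ does degenerate to $S^{(n-x,x)}$ and $\omega(x;i;n,m;q)$ to the $\frS_n$ zonal value — but this is standard (it is exactly the Hahn $\leftrightarrow$ $q$-Hahn degeneration in the Askey scheme together with the well-known Gelfand pair $(\frS_n,\frS_m\times\frS_{n-m})$), so the argument is essentially a transcription of \cref{s:q} with no new obstacle.
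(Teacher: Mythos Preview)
Your proposal is correct and follows essentially the same approach as the paper: the paper likewise defines $\wt p(x)=\bra{\Xi_{n,m|k,l}}\sfP_{(n-x,x)}\ket{\Xi_{n,m|k,l}}\ge 0$ via the $\frS_n$-action on $(\bbC^2)^{\otimes n}$, and your vector $\ket{\Xi}$ is exactly the paper's $\ket{\Xi_{n,m|k,l}}$, which it presents as the tensor product $\ket{1^l 0^{k-l}}\otimes\ket{\Xi_{N+M,M}}$ of a coherent state with a Dicke state. The identification $\wt p(x)=p(x)$ is obtained in the paper (\cref{thm:Hahn}) precisely by rerunning the proof of \cref{thm:q:qH} with the Gelfand pair $(\frS_n,\frS_m\times\frS_{n-m})$ in place of $(\GL(n,\Fq),P(m,n-m,\Fq))$, just as you outline.
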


The proof is very similar to the case where $q$ is a prime power (\cref{s:q}),
but here we use the representation theory of the symmetric group $\frS_n$.
In \cref{ss:q=1:pos}, \eqref{eq:q=1:p(x)}, we introduce a new quantity $\wt{p}(x)$
using the $\frS_n$-action in the classical Schur-Weyl duality between $\frS_n$ and $\SU(2)$.
We have the positivity $\wt{p}(x) \ge 0$ \eqref{eq:q=1:p(x)}.
Next we give \cref{thm:Hahn}, which states that 
$\wt{p}(x)$ is equal to the right hand side of the equality \eqref{eq:q=1:4332} on $p(x)$.
Thus, we have $\wt{p}(x)=p(x)$, and hence $p(x) \ge 0$.
Since $\sum_{x=0}^mp(x)=1$ by \cref{thm:q=1:sum=1}, 
we have the first half of the claim.
The latter half follows from \cref{thm:q=1:sum=1}.

\subsection{The positivity from \texorpdfstring{$\frS_n$}{Sn}-action} 
\label{ss:q=1:pos}

In this subsection, we construct a positive quantity $\wt{p}(x)$ 
using some $\frS_n$-representation, and prove the positivity of $p(x)$.
The construction of $\wt{p}(x)$ follows a process similar to that of 
the $q$-case in \cref{ss:q:pos}, but with the group $\GL(n,\Fq)$ replaced by $\frS_n$, 
and the tensor space $T=(\bbC^2)^{\otimes [n]_q}$ by $(\bbC^2)^{\otimes n}$.

Let us consider the tensor space $(\bbC^2)^{\otimes n}$
of $\bbC^2 = \bbC \ket{0} \oplus \bbC \ket{1}$.
The symmetric group $\frS_n$ acts on $(\bbC^2)^{\otimes n}$ by permuting tensor factors,
and this action commutes with that of the natural representation $\bbC^2$ of 
the special unitary group $\SU(2)$.
Thus, we are in the situation of the classical Schur-Weyl duality
$\SU(2) \ract (\bbC^2)^{\otimes n} \lact \frS_n$.
This bimodule decomposes as 
\begin{align}\label{eq:q=1:SW}
 (\bbC^2)^{\otimes n} = \bigoplus_{x=0}^{\fl{n/2}} U_{(n-x,x)} \otimes V_{(n-x, x)},
\end{align}
where the left factor $U_{(n-x,x)}$ is the highest weight $\SU(2)$-irrep of dimension $n-2x+1$, and
the right factor $V_{(n-x,x)}$ is the $\frS_n$-irrep corresponding to the partition $(n-x,x)$.

We equip with $\bbC^2$ the standard Hermitian form, 
which induces another Hermitian form on $(\bbC^2)^{\otimes n}$,
and this tensor space is a unitary representation of $\frS_n$.
Then, similarly as \eqref{eq:q:P_a}, we have the orthogonal projector
$\sfP_{(n-x,x)}\colon (\bbC^2)^{\otimes n} \srj U_{(n-x,x)} \otimes V_{(n-x,x)}$
expressed by
\begin{align}\label{eq:1-2}
 \sfP_{(n-x,x)} = \frac{\dim V_{(n-x,x)}}{n!}
 \sum_{g \in \frS_n} \ol{\chi_{(n-x,x)}(g)} \, \pi(g),
\end{align}
where $\chi_{(n-x,x)}$ denotes the character of $V_{(n-x,x)}$, 
the overline denotes the complex conjugate, and $\pi$ denotes the $\frS_n$-action.
The dimension of $V_{(n-x,x)}$ is given by the hook length formula
(see \cite[I.5, Example 2; I.7, (7.6)]{M} for example), i.e.,
the $q \to 1$ limit of \eqref{eq:q:dimV_x}:
\begin{align}\label{eq:q=1:dim}
 \dim V_{(n-x,x)} = \frac{n-2x+1}{n-x+1} \binom{n}{x} = \binom{n}{x}-\binom{n}{x-1}.
\end{align}
Now the same argument as in \eqref{eq:q:gen-pos} gives
\begin{align}\label{eq:q=1:gen-pos}
 \bra{\psi}\sfP_{(n-x,x)}\ket{\psi} \ge 0
\end{align}
for any $\ket{\psi} \in (\bbC^2)^{\otimes n}$ and $x\in\{0,1,\dotsc,\fl{n/2}\}$.

Next we make a distinguished choice of $\ket{\psi}$.
Motivated by the analysis of asymmetry in the context of quantum information theory 
(see \cite{HHY2,H3}), we consider the mixture of a coherent state and 
a randomized state in $(\bbC^2)^{\otimes n}$.
The coherence is described by the $\frS_n$-action, so that 
a coherent state is given by the normalized vector 
\begin{align}
\label{eq:q=1:1l0k}
 \ket{1^l \, 0^{k-l}} 
&\ceq 
 \ket{1}^{\otimes l} \otimes \ket{0}^{\otimes (k-l)} \in (\bbC^2)^{\otimes k}
\end{align}
with some $l,k-l \in \bbN$, and a randomized state is 
\begin{align} 
 \ket{\Xi_{N+M,M}} 
&\ceq \tbinom{N+M}{M}^{-1/2} 
 \bigl(\ket{1^M \, 0^N} + \text{$\frS_{N+M}$-permuted terms}\bigr)
 \in (\bbC^2)^{\otimes (N+M)}
\end{align}
with some $M,N \in \bbN$.
In quantum physics literatures, the state $\ket{\Xi_{N+M,M}}$ is called the Dicke state \cite{Di}.
Now the mixture we want to consider is the normalized vector 
\begin{align}\label{eq:q=1:Xi}
 \ket{\Xi_{n,m|k,l}} \ceq \ket{1^l \, 0^{k-l}} \otimes \ket{\Xi_{N+M,M}} \in (\bbC^2)^{\otimes n},
\end{align}
where we set $M \ceq m-l$ and $N \ceq n-m-k+l$. 
Thus, $m$ denotes the number of $\ket{1}$'s and $N+M=n-k$ denotes the ``length'' of the Dicke state.
The range of the tuple $(n,m,k,l)$ is given by the conditions $l,k-l,M,N \in \bbN$, i.e., 
it belongs to the set $\clN'$ \eqref{eq:0:clN'}.

Now we assume $m \le n-m$ and take $(n,m,k,l) \in \clN$ \eqref{eq:0:clN}. 
For $x \in \{0,1,\dotsc,m\}$, we define 
\begin{align}\label{eq:q=1:p(x)}
 \wt{p}(x) = \wt{p}(x \midd n,m,k,l) \ceq 
 \bra{\Xi_{n,m|k,l}} \sfP_{(n-x,x)} \ket{\Xi_{n,m|k,l}}.
\end{align}
By \eqref{eq:q=1:gen-pos}, we have
\begin{align}\label{eq:q=1:pos}
 \wt{p}(x) \ge 0.
\end{align}

%

The element $\ket{\Xi_{n,m|k,l}}$ turns out to be a nice spherical vector,
and we can evaluate the value $\wt{p}(x)$. 
Hereafter we use the standard symbol $\hg{r}{s}$ of hypergeometric series \eqref{eq:ntn:hg}. 

\begin{thm}
\label{thm:Hahn}
For $(n,m,k,l) \in \clN$ and $x \in \{0,1,\dotsc,m\}$, we have
\begin{align}\label{eq:q=1:H}
 \wt{p}(x \midd n,m,k,l) = \frac{\binom{n}{x}}{\binom{n}{m}} \, \frac{n-2x+1}{n-x+1} \, 
 \sum_{i=0}^{M \wedge N} \binom{M}{i} \binom{N}{i} \, \omega_{(n-x,x)}(i), 
\end{align}
where $M:=m-l$, $N:=n-m-k+l$, and 
\begin{align} 
\label{eq:q=1:sph-omega}
  \omega_{(n-x, x)}(i) 
&=\HG{3}{2}{-i,-x,x-n-1}{-m,m-n}{1} 
 =\binom{m}{i}^{-1} \binom{n-m}{i}^{-1}
  \sum_{r=0}^{i \wedge x} (-1)^r \binom{x}{r} \binom{m-x}{i-r} \binom{n-m-x}{i-r}.
\end{align}
\end{thm}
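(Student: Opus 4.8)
The plan is to compute the inner product $\bra{\Xi_{n,m|k,l}} \sfP_{(n-x,x)} \ket{\Xi_{n,m|k,l}}$ directly, mirroring the structure of the proof of \cref{thm:q:qH} but with $\GL(n,\Fq)$ replaced by $\frS_n$. Expanding $\sfP_{(n-x,x)}$ by \eqref{eq:1-2} and $\ket{\Xi_{n,m|k,l}}$ by \eqref{eq:q=1:Xi}, the quantity becomes a double sum over basis tensors $\ket{e_A},\ket{e_B}$ indexed by subsets $A,B$ of the $n$ tensor slots (those carrying $\ket{1}$), weighted by $\sum_{g \in \frS_n}\chi_{(n-x,x)}(g)\,\delta_{A,gB}$. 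Using that $\frS_n$ acts transitively on subsets of a fixed cardinality, this character sum collapses, via the zonal spherical function of the Gelfand pair $(\frS_n,\frS_m\times\frS_{n-m})$, to $\abs{\frS_m\times\frS_{n-m}}\,\omega_{(n-x,x)}(i)$ where $i$ measures the ``overlap defect'' $i = m - \abs{A \cap B}$ — exactly as $K_i$ appeared in the $q$-case. This reduces $\wt{p}(x)$ to $\frac{\dim V_{(n-x,x)}}{n!}\,\abs{\frS_m\times\frS_{n-m}}$ times $\sum_i (\text{number of pairs }(A,B)\text{ with overlap defect }i)\,\omega_{(n-x,x)}(i)$, where $A,B$ range over the subsets arising from permuted summands of $\ket{\Xi_{n,m|k,l}}$.

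The combinatorial input I would then supply is the count of such pairs. The vector $\ket{\Xi_{n,m|k,l}} = \ket{1^l 0^{k-l}} \otimes \ket{\Xi_{N+M,M}}$ fixes the first $l$ slots to be $\ket{1}$ and the next $k-l$ to be $\ket{0}$; the remaining $N+M = n-k$ slots carry the Dicke state, so the relevant subsets $\clW$ all contain a fixed $l$-element ``core'' and pick $M = m-l$ of the last $N+M$ slots. Counting pairs $(A,B)$ of such subsets with $\abs{A \cap B} = m - i$ amounts to a $q=1$ version of the count in \eqref{eq:q:K_i/K}: choosing the symmetric difference splits into choosing $i$ elements to drop from $A$ among its $M$ ``free'' slots and $i$ to add from the $N$ complementary free slots, giving $\binom{N+M}{M}\binom{M}{i}\binom{N}{i}$ pairs. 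Substituting this, together with $\dim V_{(n-x,x)} = \frac{n-2x+1}{n-x+1}\binom{n}{x}$ from \eqref{eq:q=1:dim} and $\abs{\frS_m\times\frS_{n-m}}/n! = \binom{n}{m}^{-1}$, the normalization $\binom{N+M}{M}$ cancels and yields precisely the right-hand side of \eqref{eq:q=1:H}.

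For the second displayed identity in \eqref{eq:q=1:sph-omega}, I would either invoke the classical formula of the zonal spherical function for $(\frS_n,\frS_m\times\frS_{n-m})$ in terms of a dual Hahn / Hahn polynomial (this is the $q\to1$ limit of \eqref{eq:q:omega}, referenced in the remark after it and in \cite{BI,HHY}), or, for the binomial-sum expression, use the standard combinatorial description $\omega_{(n-x,x)}(i) = \binom{m}{i}^{-1}\binom{n-m}{i}^{-1}\sum_r (-1)^r \binom{x}{r}\binom{m-x}{i-r}\binom{n-m-x}{i-r}$, which follows from the explicit $\frS_n$-module structure of $V_{(n-x,x)}$ inside the permutation module on $i$-subsets and a Möbius/inclusion-exclusion argument. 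Alternatively, since \eqref{eq:q=1:4332} is already established as the $q\to1$ limit of \cref{thm:sum1}, one can simply combine $\wt{p}(x) \ge 0$ with the fact that \eqref{eq:q=1:H} matches \eqref{eq:q=1:4332} term by term.

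The main obstacle I anticipate is the bookkeeping in the pair-counting step — making the reduction of the $\frS_n$ character sum to $\omega_{(n-x,x)}(i)$ fully rigorous requires carefully choosing coset representatives (analogues of the elements $g_0,\sigma_0$ in the proof of \cref{thm:q:qH}) so that the relevant group element lands in the correct double coset of $\frS_m\times\frS_{n-m}$, and then verifying that the Dicke-state structure contributes the clean factor $\binom{M}{i}\binom{N}{i}$ with the fixed $\ket{1^l 0^{k-l}}$ prefix playing no role beyond pinning down the core. Once that dictionary with \cref{ss:q:pos} is set up, the remaining computation is routine.
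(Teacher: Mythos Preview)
Your proposal is correct and follows essentially the same route as the paper: the paper's proof of \cref{thm:Hahn} simply says to rerun the proof of \cref{thm:q:qH} with the Gelfand pair $(\GL(n,\Fq),P(m,n-m,\Fq))$ replaced by $(\frS_n,\frS_m\times\frS_{n-m})$ and all $q$-quantities specialized to $q=1$, which is exactly the computation you outline (including the pair count $\binom{N+M}{M}\binom{M}{i}\binom{N}{i}$ cancelling against the Dicke normalization). The formula \eqref{eq:q=1:sph-omega} for $\omega_{(n-x,x)}(i)$ is likewise quoted from the classical theory of the Gelfand pair $(\frS_n,\frS_m\times\frS_{n-m})$ as the $q\to1$ limit of \eqref{eq:q:omega}, just as you suggest.
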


\begin{proof}
We can compute $\wt{p}(x)$ in a similar way as the proof of \cref{thm:q:qH},
replacing the Gelfand pair $(\GL(n,\Fq),P(m,n-m,\Fq))$ by $(\frS_n,\frS_m \times \frS_{n-m})$
and every $q$-quantities in the formulas by their $q \to 1$ limit.
For example, $q$-binomial coefficients are replaced by binomial coefficients, and 
$\dim_{\bbC}V_x$ is replaced by $\dim_{\bbC}V_{(n-x,x)}$.
We refer to the preprint version \cite[\S4.1]{HHY} 
for a detailed explanation.
\end{proof}

Let us prove \cref{thm:q=1:main}. 
The above \cref{thm:Hahn} and the equality \eqref{eq:q=1:4332} imply $p(x)=\wt{p}(x)$.
Since $\wt{p}(x) \ge 0$ by \eqref{eq:q=1:p(x)}, we have $p(x) \ge 0$.
Since $\sum_{x=0}^mp(x)=1$ by \cref{thm:q=1:sum=1}, 
the function $p(x)$ is the pmf of a discrete probability distribution.
The cdf is given by the terminating $\hg{4}{3}$-series in \cref{thm:q=1:sum=1}.
Hence we obtain \cref{thm:q=1:main}, i.e., \cref{thm:main} \eqref{i:main:2}.

By \eqref{eq:q=1:p=R}, the pmf $p(x)$ is expressed by Racah polynomial,
and we can obtain various properties of $p(x)$, 
which will be used in the study of asymptotic behavior 
under the limit $n \to \infty$ in \cite{HHY2,H3}.
Below we give just one corollary, which is the key tool of our asymptotic analysis. 
It is an immediate consequence of the formula \eqref{eq:q=1:p=R}
and the three-term recursion of Racah polynomial \cite[\S 9.2, pp.191--192]{KLS}.

\begin{prp}[{Three-term recurrence relation}]
\label{prp:rec}
For $(n,m,k,l) \in \clN$ and $x \in \{0,1,\dotsc,m\}$, 
the pmf $p(x) = p(x \midd n,m,k,l)$ satisfies 
\begin{align}
\begin{split}
 a_x \frac{n-2x-1}{n-x} \binom{n}{x+1}^{-1} p(x+1) 
&-(a_x+c_x-M N) \frac{n-2x+1}{n-x+1} \binom{n}{x}^{-1} p(x) \\
&+c_x \frac{n-2x+3}{n-x+2} \binom{n}{x-1}^{-1} p(x-1) = 0,
\end{split}
\end{align}
where $M \ceq m-l$, $N \ceq n-m-k+l$, and the coefficients $a_x$ and $c_x$ are given by 
\begin{align}
 a_x &\ceq \frac{(m-x)(n-m-x)(n-k-x)(n-x+1)}{(n-2x)(n-2x+1)}, \\
 c_x &\ceq \frac{x(x-k-1)(m-x+1)(n-m-x+1)}{(n-2x+1)(n-2x+2)}.
\end{align}
\end{prp}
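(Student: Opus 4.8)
The plan is to derive the three-term recurrence directly from the Racah presentation \eqref{eq:q=1:p=R} together with the known three-term recurrence for Racah polynomials \cite[\S9.2, pp.191--192]{KLS}. Writing $w(x) \ceq \binom{n-k}{m-l}\binom{n}{m}^{-1}$, the formula \eqref{eq:q=1:p=R} reads
\begin{align}\label{eq:rec-proof-1}
 p(x) = w(x) \, \frac{n-2x+1}{n-x+1} \, \binom{n}{x} \, R_x(\lambda(M); \alpha,\beta,\gamma,\delta),
\end{align}
with the parameter specialization $\alpha = -m-1$, $\beta = -n+m-1$, $\gamma = -n+k-1$, $\delta = 0$. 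Notice that the prefactors $\binom{n}{x}(n-2x+1)/(n-x+1)$ are exactly the normalizers that appear, divided back out, in the statement of \cref{prp:rec}; so the combinations $\frac{n-2x+1}{n-x+1}\binom{n}{x}^{-1} p(x)$ appearing there equal $w(x) \cdot R_x(\lambda(M);\alpha,\beta,\gamma,\delta)$ up to the constant $w(x)$, which is independent of $x$. Hence the asserted identity is, after cancelling $w(x)$, precisely the standard three-term recurrence
\begin{align}\label{eq:rec-proof-2}
 a_x\, R_{x+1}(\lambda(M)) - (a_x + c_x - \lambda(M))\, R_x(\lambda(M)) + c_x\, R_{x-1}(\lambda(M)) = 0
\end{align}
for the Racah polynomials $R_s(\lambda(y);\alpha,\beta,\gamma,\delta)$, evaluated at the point $y = M$.

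First I would recall from \cite[\S9.2]{KLS} the exact form of the recurrence coefficients $A_s, C_s$ in the recurrence $\lambda(y) R_s = A_s R_{s+1} - (A_s + C_s) R_s + C_s R_{s-1}$, namely
\begin{align}\label{eq:rec-proof-3}
 A_s = \frac{(s+\alpha+1)(s+\alpha+\beta+1)(s+\beta+\delta+1)(s+\gamma+1)}{(2s+\alpha+\beta+1)(2s+\alpha+\beta+2)}, \quad
 C_s = \frac{s(s+\alpha+\beta-\gamma)(s+\alpha-\delta)(s+\beta)}{(2s+\alpha+\beta)(2s+\alpha+\beta+1)},
\end{align}
and the eigenvalue $\lambda(y) = y(y+\gamma+\delta+1)$. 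Then I would substitute $\alpha = -m-1$, $\beta = -n+m-1$, $\gamma = -n+k-1$, $\delta = 0$ and $y = M = m - l$ and simplify. One checks $\alpha+\beta+1 = -n-1$, so $2s+\alpha+\beta+1 = 2s-n-1$, $2s+\alpha+\beta+2 = 2s-n$, etc.; substituting $s = x$ one finds $A_x$ matches the stated $a_x$ and $C_x$ matches $c_x$ after elementary factor-by-factor comparison (e.g. $s+\alpha+1 = x-m$, $s+\gamma+1 = x-n+k$, and so on, with the overall signs of the four numerator factors cancelling against the sign of $(2x-n)(2x-n\pm 1)$ in the denominator). Likewise $\lambda(M) = M(M + \gamma + \delta + 1) = (m-l)(m-l-n+k) = -MN$ with $N = n-m-k+l$, which is why the middle coefficient in \cref{prp:rec} contains $-MN$. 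After these substitutions, multiplying \eqref{eq:rec-proof-2} through by the constant $w(x)$ and reintroducing the normalizers via \eqref{eq:rec-proof-1} yields exactly the displayed identity.

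The only genuine subtlety, and what I expect to be the main obstacle, is bookkeeping of signs and of the degenerate denominators: the chosen parameters are negative integers, so several of the factors $2s+\alpha+\beta+j$ can vanish for the relevant range of $x$ (e.g. $2x - n = 0$ when $n$ is even and $x = n/2$), and one must argue that the recurrence still holds as a polynomial identity in $x$ — equivalently, clear denominators first and verify the resulting polynomial identity, which is legitimate since $p(x)$ is defined for all $x \in \{0,1,\dotsc,m\}$ with $m \le \lfloor n/2\rfloor$ and the terminating series is well-defined there. A clean way to handle this is to note that both sides of the cleared-denominator identity are polynomial (in fact the whole assertion is a polynomial identity in the formal variables $n,m,k,l$ subject to no constraint), so it suffices to verify it generically, where all denominators are nonzero, and then invoke continuity/Zariski density. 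With that caveat dispatched, the proof reduces to the routine but lengthy verification that \eqref{eq:rec-proof-3} specializes to $a_x, c_x$, which I would present compactly by listing the four numerator factors of each coefficient side by side.
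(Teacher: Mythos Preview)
Your approach is exactly the one the paper indicates: the proposition is stated there as an immediate consequence of the Racah presentation \eqref{eq:q=1:p=R} and the three-term recurrence from \cite[\S9.2]{KLS}, and you carry out precisely this substitution with the correct specialization $(\alpha,\beta,\gamma,\delta)=(-m-1,\,m-n-1,\,k-n-1,\,0)$. There is only a harmless sign slip in your displayed recurrence: rearranging the KLS form $\lambda(y)R_s=A_sR_{s+1}-(A_s+C_s)R_s+C_sR_{s-1}$ gives $-(A_x+C_x+\lambda(M))$ in the middle term, and since $\lambda(M)=M(M+\gamma+\delta+1)=-MN$ this yields the stated $-(a_x+c_x-MN)$.
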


\section{The case \texorpdfstring{$0<\abs{q-1} \ll 1$}{0<|q-1|<<1}}
\label{s:q-small}

In this section, we study the case where $q$ is a real number 
with $q \ne 1$ and $\abs{q-1}$ sufficiently small,
and show \cref{thm:main} \eqref{i:main:3}.
Recall from \cref{dfn:0:p-genq} the function $p(x;q) \in \bbQ(q)$ 
depending on $(n,m,k,l) \in \clN$ and $x \in \{0,1,\dotsc,m\}$.
By \cref{thm:q:sum=1}, we have $\sum_{x=0}^m p(x;q)=1$.
Thus, to show the existence of distribution in \cref{thm:main} \eqref{i:main:3}, 
it remains to prove:

\begin{thm}\label{thm:pos}
Let $(n,m,k,l) \in \clN$ and $x \in \{0,1,\dotsc,m\}$.
For $q \in \bbR$ with $q \ne 0$ and $\abs{q-1}$ small enough, we have $p(x;q) \ge 0$.
\end{thm}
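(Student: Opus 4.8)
By \cref{thm:q:sum=1} we have $\sum_{x=0}^m p(x;q)=1$, so it suffices to prove $p(x;q)\ge0$ for $\abs{q-1}$ small, and the plan is a perturbative argument anchored at $q=1$, where positivity is supplied by the Schur--Weyl model of \cref{ss:q=1:pos}. The first, routine, point is that $p(x;q)$ is regular at $q=1$: in \cref{dfn:0:p-genq} the only denominators are $\qbinom{n}{m}{q}$ and $[n-x+1]_q$, which at $q=1$ become the positive integers $\binom{n}{m}$ and $n-x+1$. Hence $p(x;q)$ is a rational function of $q$ with no pole at $q=1$, so it is continuous on a real neighbourhood of $1$ with $p(x;1)=p(x)$. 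By \cref{thm:Hahn}, the identity $p(x)=\wt{p}(x)=\bra{\Xi_{n,m|k,l}}\sfP_{(n-x,x)}\ket{\Xi_{n,m|k,l}}$ of \cref{ss:q=1:pos}, and the positivity \eqref{eq:q=1:gen-pos}, we get $p(x;1)\ge0$. If $p(x;1)>0$ for every $x\in\{0,1,\dotsc,m\}$, continuity alone gives $p(x;q)>0$ near $q=1$ and we are done; but $p(x;1)$ genuinely vanishes for some parameters (for instance $p(1\midd 2,1,0,0)=0$), and I do not see an a priori reason for $p(x;q)$ to vanish identically in that case, so the real content is to control the sign of $p(x;q)$ near $q=1$ without assuming strict positivity at $1$.

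For this I would $q$-deform the Schur--Weyl model of \cref{ss:q=1:pos}, in the spirit of what \cref{s:q} does for prime powers. Keep the space $(\bbC^2)^{\otimes n}$ with the permutation action of $\frS_n$, keep its standard Hermitian form, and keep the fixed projectors $\sfP_{(n-x,x)}$ onto the isotypic components of the decomposition \eqref{eq:q=1:SW}; only the spherical vector $\ket{\Xi_{n,m|k,l}}$ of \eqref{eq:q=1:Xi} is to be replaced by a $q$-deformed vector $\ket{\Xi_{n,m|k,l;q}}\in(\bbC^2)^{\otimes n}$, specializing to it at $q=1$ --- plausibly a $q$-weighted version of the mixture of a coherent state and a Dicke state. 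Writing $\ket{\Xi_q}$ for $\ket{\Xi_{n,m|k,l;q}}$: since each $\sfP_{(n-x,x)}$ is a self-adjoint idempotent for the (fixed, positive-definite) standard form, exactly as in \eqref{eq:q=1:gen-pos} one has
\begin{equation*}
 \bra{\Xi_q}\sfP_{(n-x,x)}\ket{\Xi_q}=\pair{\sfP_{(n-x,x)}\Xi_q}{\sfP_{(n-x,x)}\Xi_q}\ge0
\end{equation*}
for every $q$ at which $\ket{\Xi_q}$ is a well-defined vector. This manifest positivity is not circular, because $\sfP_{(n-x,x)}\ket{\Xi_q}$ is a genuine vector in a genuine inner-product space.

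The heart of the matter, and the step I expect to be hardest, is to choose the deformed vector so that $\bra{\Xi_q}\sfP_{(n-x,x)}\ket{\Xi_q}=c(q)\,p(x;q)$ for $x=0,1,\dotsc,m$, some positive scalar $c(q)$, and all $(n,m,k,l)\in\clN$ with $\abs{q-1}$ small. On the weight-$m$ subspace of $(\bbC^2)^{\otimes n}$ the matrix of $\sfP_{(n-x,x)}$ lies in the Bose--Mesner algebra of the Johnson scheme $J(n,m)$, so one wants the $q$-weights defining $\ket{\Xi_q}$ to turn the resulting intersection-number sum into precisely the $q$-Hahn sum of \cref{thm:sum1}, with the values $\omega(x;i;n,m;q)$ of \eqref{eq:q:omega} playing the role they play in \cref{s:q}; the summation manipulations of \cref{s:sum} should handle the bookkeeping. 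One should expect this identity to hold only for $\abs{q-1}$ small --- e.g.\ because square roots entering the normalization of $\ket{\Xi_q}$ cease to be real away from a neighbourhood of $1$ --- which matches the local nature of \cref{thm:pos}. Granting the identity, $p(x;q)=c(q)^{-1}\pair{\sfP_{(n-x,x)}\Xi_q}{\sfP_{(n-x,x)}\Xi_q}\ge0$, and combined with $\sum_{x=0}^m p(x;q)=1$ this proves \cref{thm:pos}; the cdf of $P_{n,m,k,l;q}$ is then the terminating $\qhg{4}{3}$-series \eqref{eq:q:s(y)}, completing \cref{thm:main}\,\eqref{i:main:3}.
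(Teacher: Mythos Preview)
Your reduction is correct up to the point where you isolate the case $p(x;1)=0$, but the proposed resolution has a real gap: you never construct the deformed vector $\ket{\Xi_q}$, and its mere \emph{existence} with the property $\bra{\Xi_q}\sfP_{(n-x,x)}\ket{\Xi_q}=c(q)\,p(x;q)$ is equivalent to the non-negativity you are trying to prove. Indeed, for any real tuple $(p_x)_{x=0}^m$, a weight-$m$ vector $v\in(\bbC^2)^{\otimes n}$ with $\|\sfP_{(n-x,x)}v\|^2=p_x$ for every $x$ exists iff every $p_x\ge0$ (for the ``if'' direction just take $v=\sum_x\sqrt{p_x}\,u_x$ with unit vectors $u_x$ in the respective isotypic pieces). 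So without an explicit $\ket{\Xi_q}$ the argument is circular. There is also a conceptual slip in the Bose--Mesner remark: the matrix entries of $\sfP_{(n-x,x)}$ on the weight-$m$ space are governed by the \emph{classical} Hahn values $\omega_{(n-x,x)}(i)$ of \eqref{eq:q=1:sph-omega}, not the $q$-Hahn values $\omega(x;i;n,m;q)$ of \eqref{eq:q:omega}. Hence $\bra{\Xi_q}\sfP_{(n-x,x)}\ket{\Xi_q}$ is always of the form $\tfrac{\dim V_{(n-x,x)}}{\binom{n}{m}}\sum_i b_i(q)\,\omega_{(n-x,x)}(i)$, and there is no evident ``$q$-weighted Dicke state'' that turns this into the $q$-Hahn sum of \cref{thm:sum1}; the $q$-Hahn values in \cref{s:q} appear only because the group and the space were changed.

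The paper avoids any $q$-deformed vector. After the continuity step you describe, it determines the zero set exactly: \cref{prp:p>0} uses the Clebsch--Gordan expansion of $\ket{\Xi_{n,m|k,l}}$ as an $\SU(2)$-vector to show that $p(x)>0$ precisely for $0\le x\le k\wedge m$, so $p(x)=0$ iff $x>k$. Then \cref{lem:q;p=0} proves $p(x;q)=0$ \emph{identically in $\bbQ(q)$} for every $x>k$, by applying Sears' $\qhg{4}{3}$ transformation \eqref{eq:Sears} to the defining series: the transformed numerator acquires the factor $(q^{-k};q)_x$, which vanishes for $x>k$. Thus whenever $p(x;1)=0$ one already has $p(x;q)\equiv0$, and continuity handles the remaining $x$.
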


In the proof, we will use the property of the limit function $p(x)=\lim_{q\to1}p(x;q)$.
By \cref{thm:q=1:main}, we have $p(x) \ge 0$.
Since $p(x;q)$ is a continuous function of $q$, 
it is enough to show that for $x \in \{0,1,\dotsc,m\}$ satisfying $p(x)=0$,
we have $p(x;q)=0$.

Let us study the support of $p(x)$, i.e., the range of $x \in \{0,1,\dotsc,m\}$ such that $p(x)>0$.
The result is:

\begin{prp}\label{prp:p>0}
$p(x)>0$ if and only if $x \in \{0,1,\dotsc,k \wedge m\}$.
\end{prp}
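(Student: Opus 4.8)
The plan is to exploit the representation-theoretic description $p(x)=\wt p(x)=\bra{\Xi_{n,m|k,l}}\sfP_{(n-x,x)}\ket{\Xi_{n,m|k,l}}$ established via \cref{thm:Hahn} and \eqref{eq:q=1:4332}, rather than trying to analyze the terminating $\hg{4}{3}$-series directly. Thus $p(x)>0$ if and only if the isotypical projection $\sfP_{(n-x,x)}\ket{\Xi_{n,m|k,l}}\neq 0$, i.e. if and only if the $\frS_n$-irrep $V_{(n-x,x)}$ occurs in the cyclic submodule $\bbC[\frS_n]\ket{\Xi_{n,m|k,l}}\subset(\bbC^2)^{\otimes n}$ generated by the distinguished vector $\ket{\Xi_{n,m|k,l}}=\ket{1^l\,0^{k-l}}\otimes\ket{\Xi_{N+M,M}}$. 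So the whole statement reduces to identifying which two-row shapes $(n-x,x)$ appear in that cyclic module.

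First I would analyze the cyclic module. The vector $\ket{\Xi_{N+M,M}}$ is (up to scalar) the sum over all $\frS_{N+M}$-translates of $\ket{1^M\,0^N}$, hence spans the trivial $\frS_{N+M}$-subrepresentation inside $(\bbC^2)^{\otimes(N+M)}$; equivalently $\ket{\Xi_{n,m|k,l}}$ is, under $\frS_k\times\frS_{n-k}$ (permuting the first $k$ and the last $N+M=n-k$ tensor slots), fixed by $\frS_{n-k}$ and sits inside the one-dimensional weight-$l$ line of $(\bbC^2)^{\otimes k}$ tensored with the trivial $\frS_{n-k}$-module. Therefore $\bbC[\frS_n]\ket{\Xi_{n,m|k,l}}$ is a quotient of the induced module $\Ind_{\frS_k\times\frS_{n-k}}^{\frS_n}\bigl(W\boxtimes \mathbf 1\bigr)$, where $W\subset(\bbC^2)^{\otimes k}$ is the $\frS_k$-cyclic module generated by $\ket{1^l\,0^{k-l}}$. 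The vector $\ket{1^l\,0^{k-l}}$ generates, under $\frS_k$, exactly the permutation module $M^{(k-l,l)}=\Ind_{\frS_l\times\frS_{k-l}}^{\frS_k}\mathbf 1\cong\bigoplus_{j=0}^{l\wedge(k-l)}V_{(k-j,j)}$ (Young's rule for a two-row permutation module), using $l\le k\wedge(k-l)$ need not hold but the two-row structure does. Combining, $\bbC[\frS_n]\ket{\Xi_{n,m|k,l}}$ is a quotient of $\Ind_{\frS_k\times\frS_{n-k}}^{\frS_n}\bigl(M^{(k-l,l)}\boxtimes\mathbf 1\bigr)$, whose irreducible two-row constituents, by Pieri/Littlewood–Richardson, are precisely the $V_{(n-x,x)}$ with $0\le x\le k\wedge m$ — here the constraint $x\le m$ comes from the total weight ($m$ ones) forcing $x\le m$ in $(\bbC^2)^{\otimes n}$, and $x\le k$ from the fact that the second row of the inducing datum has size at most $l\le k$ and induction along a one-row trivial module from $\frS_{n-k}$ can only lengthen the first row.

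The remaining task is the converse: each such $V_{(n-x,x)}$, $0\le x\le k\wedge m$, genuinely survives in the quotient, i.e. $\sfP_{(n-x,x)}\ket{\Xi_{n,m|k,l}}\neq 0$. For this I would exhibit a lowest-weight (or highest-weight) vector of type $(n-x,x)$ in $(\bbC^2)^{\otimes n}$ having nonzero inner product with $\ket{\Xi_{n,m|k,l}}$; equivalently, use the explicit formula \eqref{eq:q=1:H}–\eqref{eq:q=1:sph-omega} to see that $\wt p(x)$ is a sum of terms $\binom{M}{i}\binom{N}{i}\omega_{(n-x,x)}(i)$ whose sign pattern can be controlled. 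Concretely, I expect the cleanest argument is: for $x\le k\wedge m$ choose $\ket{\psi}$ to be a specific highest-weight vector supported on the first $\max(x,\text{something})$ slots whose overlap with $\ket{\Xi_{n,m|k,l}}$ is a manifestly positive rational combination of binomials, while for $x>k$ or $x>m$ a weight/length count kills the overlap. The main obstacle is this positivity of the surviving overlaps: the formula \eqref{eq:q=1:sph-omega} for $\omega_{(n-x,x)}(i)$ is an alternating sum, so $\wt p(x)$ is not visibly positive term by term, and ruling out accidental cancellation for all $x\le k\wedge m$ is the delicate point. I would handle it by realizing $\wt p(x)=\|\sfP_{(n-x,x)}\ket{\Xi_{n,m|k,l}}\|^2$ and proving the projection is nonzero via the branching picture above (a single explicitly constructed vector with nonzero pairing suffices), thereby sidestepping the alternating sum entirely; the Hahn-polynomial formula is then only used to confirm the closed form, not to decide positivity.
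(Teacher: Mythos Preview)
Your overall strategy---reducing $p(x)>0$ to the nonvanishing of $\sfP_{(n-x,x)}\ket{\Xi_{n,m|k,l}}$---matches the paper's starting point, and your argument for the ``only if'' direction via the $\frS_n$-cyclic module and Pieri is sound: the cyclic module is indeed a quotient of $\Ind_{\frS_k\times\frS_{n-k}}^{\frS_n}(M^{(k-l,l)}\boxtimes\mathbf 1)$, whose two-row constituents are the $V_{(n-x,x)}$ with $x\le k$, and the weight constraint (the vector lies in the permutation module $M^{(n-m,m)}$) gives $x\le m$.

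The gap is in the ``if'' direction. You acknowledge that showing $\sfP_{(n-x,x)}\ket{\Xi_{n,m|k,l}}\ne 0$ for every $0\le x\le k\wedge m$ is the delicate point, and you propose to exhibit an explicit highest-weight vector with nonzero overlap, but you do not actually construct one or verify the pairing. The difficulty is real: the natural candidates (Young symmetrizers applied to basis tensors, or the explicit $\hg{3}{2}$ formula for $\omega_{(n-x,x)}(i)$) all produce alternating sums whose nonvanishing against $\ket{\Xi_{n,m|k,l}}$ is exactly the cancellation question you were trying to avoid. Your plan is not wrong, but it is not yet a proof.

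The paper takes a different and cleaner route for this step. Instead of working on the $\frS_n$ side, it passes to the commuting $\SU(2)$-action via Schur--Weyl duality. The Dicke factor $\ket{\Xi_{n-k,m-l}}$ is a single $\SU(2)$ basis vector in the top component $U_{(n-k,0)}$, while $\ket{1^l0^{k-l}}$ is expanded over the $\SU(2)$-irreps $U_{(k-u,u)}\subset(\bbC^2)^{\otimes k}$. The tensor product is then governed by Clebsch--Gordan coefficients, and one obtains a \emph{sum-of-squares} expression
\[
 p(x)=\sum_{u}\frac{\dim V_{(k-u,u)}}{\binom{k}{l}}\,c(x,u\mid n,m,k,l)^2,
\]
so that non-negativity is automatic and strict positivity reduces to exhibiting a single nonzero Clebsch--Gordan coefficient in the sum. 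The paper observes that for each $x\le k\wedge m$ the sum contains a term with $u\in\{x,\,k-x,\,l\wedge(k-l)\}$, corresponding to a ``boundary'' coefficient (either $j_3=\abs{j_1\pm j_2}$ or $m_1=\pm j_1$), and such boundary Clebsch--Gordan coefficients are known to be nonzero by their explicit closed product formulas. This is what makes the positivity tractable: switching to the $\SU(2)$ side converts the problem from ruling out cancellation in an alternating sum to checking nonvanishing of a single well-understood scalar.
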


\begin{proof}
For a while, we assume only $(n,m,k,l) \in \clN'$ \eqref{eq:0:clN'}, 
i.e., we do not assume $m \le n-m$.
We regard the bimodule $\SU(2) \ract (\bbC^2)^{\otimes n} \lact \frS_n$ as an $\SU(2)$-module 
(in contrast, we regard it as an $\frS_n$-module in the proof of \cref{thm:Hahn}).
We use the common symbols in physics literatures to denote by 
let $\{\ket{j,m}_{\SU(2)} \mid m=-j,-j+1,\dotsc,j\}$ the standard orthonormal basis of
the $(2j+1)$-dimensional $\SU(2)$-irrep with angular momenta 
$\js \ket{j,m}_{\SU(2)} = j(j+1) \ket{j,m}_{\SU(2)}$ and 
$\jz \ket{j,m}_{\SU(2)} = m \ket{j,m}_{\SU(2)}$.

First, we consider the second factor $\ket{\Xi_{n-k,m-l}}$ of 
$\ket{\Xi_{n,m|k,l}} = \ket{1^l 0^{k-l}} \otimes \ket{\Xi_{n-k,m-l}}$.
It is the $\frS_{n-k}$-averaged state of $\ket{1^{n-k}0^{m-l}}$, so we have 
\begin{align}\label{eq:CG:Xi}
 \ket{\Xi_{n-k,m-l}} = \ket{\tfrac{n-k}{2},\tfrac{n-k}{2}-(m-l)}_{\SU(2)} \in U_{(n-k,0)}, 
\end{align}
where, as in \eqref{eq:q=1:SW}, $U_{(n-k,0)}$ denotes the $\SU(2)$-irrep of dimension $n-k$.
Next, we consider the first factor $\ket{1^l 0^{k-l}} \in (\bbC^2)^{\otimes k}$.
The $\SU(2)$-decomposition of $(\bbC^2)^{\otimes k}$ is
\begin{align}
 (\bbC^2)^{\otimes k} 
 = \bigoplus_{u=0}^{\fl{k/2}} U_{(k-u,u)}^{\oplus \dim V_{(k-u,u)}}
 = \bigoplus_{u=0}^{\fl{k/2}} 
   U_{(k-u,u)}^{\oplus \left(\binom{k}{u}-\binom{k}{u-1}\right)},
\end{align}
where $U_{(k-u,u)}$ is the $\SU(2)$-irrep of dimension $k-2u+1$, 
$V_{(k-u,u)}$ is the $\frS_k$-irrep, 
and we used the hook length formula \eqref{eq:q=1:dim} of $\dim V_{(k-u,u)}$.
In the middle and right, we used the symbol $U^{\otimes n}$ to denote 
the $n$-th direct sum of $U$ as $\SU(2)$-representation.
Along this decomposition, we have 
\begin{align} 
 \ket{1^l 0^{k-l}} = \binom{k}{l}^{-1/2} \sum_{u=0}^{(k-l) \wedge l} \ 
 \sum_{i=1}^{\dim V_{(k-u,u)}} 
 \ket{\tfrac{k}{2}-u,\tfrac{k}{2}-l}_{\SU(2)}^{(i)},
\end{align}
where the $\SU(2)$-bases $\ket{\tfrac{k}{2}-u,\tfrac{k}{2}-l}_{\SU(2)}^{(i)}$ 
are orthogonal for different $i$, and the normalization factor comes from 
$\sum_{u=0}^{(k-l) \wedge l} \dim V_{(k-u,u)}=\binom{k}{l}$ by the hook length formula.

The tensor product $\ket{\Xi_{n,m|k,l}} = \ket{1^l 0^{k-l}} \otimes \ket{\Xi_{n-k,m-l}}$
is then calculated by the Littlewood-Richardson rule and the Clebsch-Gordan coefficient.
The former says 
$U_{(k-u,u)} \otimes U_{(n-k,0)} \cong 
 \bigoplus_{x=u}^{(k-u) \wedge (n-k+u)} U_{(n-x,x)}$, 
and the latter, denoted as $c(x,u \midd n,m,k,l) \ceq
  \pair{\tfrac{k}{2}-u,\tfrac{k}{2}-l,\tfrac{-k}{2},\tfrac{n-k}{2}-(m-l)}
       {\tfrac{n}{2}-x,\tfrac{n}{2}-m}$, gives 
\begin{gather*} 
  \ket{\tfrac{k}{2}-u,\tfrac{k}{2}-l}_{\SU(2)} \otimes 
  \ket{\tfrac{n-k}{2}, \tfrac{n-k}{2}-(m-l)}_{\SU(2)} 
 =\sum_{x=u}^{(k-u) \wedge (n-k+u)} c(x,u \midd n,m,k,l) 
  \ket{\tfrac{n}{2}-x,\tfrac{n}{2}-m}_{\SU(2)}^{(i)}.
\end{gather*}

Collecting the above calculations and replacing the summation order, we have
\begin{align*}
  \ket{\Xi_{n,m|k,l}} 
&=\binom{k}{l}^{-1/2} \ \sum_{u=0}^{(k-l) \wedge l} \ 
  \sum_{x=u}^{(k-u) \wedge (n-k+u)} c(x,u \midd n,m,k,l)
  \sum_{i=1}^{\dim V_{(k-u,u)}} \ket{\tfrac{n}{2}-x,\tfrac{n}{2}-m}_{\SU(2)}^{(i)} \\
&=\binom{k}{l}^{-1/2} \ \sum_{x=0}^{k \wedge m \wedge (n-m)} \ 
  \sum_{u = 0 \vee (x-n+k)}^{x \wedge (k-x) \wedge (k-l) \wedge l} c(x,u \midd n,m,k,l)
  \sum_{i=1}^{\dim V_{(k-u,u)}} \ket{\tfrac{n}{2}-x,\tfrac{n}{2}-m}_{\SU(2)}^{(i)}.
\end{align*}
Thus, the quantity $\sfP_{(n-x,x)} \ket{\Xi_{n,m|k,l}}$ is the $x$-term in the first summation, and
%
\begin{align}\label{eq:q=1:CG}
 p(x) = \bra{\Xi_{n,m|k,l}} \sfP_{(n-x,x)} \ket{\Xi_{n,m|k,l}} = 
 \sum_{u = 0 \vee (x-n+k)}^{ x \wedge (k-x) \wedge l \wedge (k-l)}
 \frac{\dim V_{(k-u,u)}}{\binom{k}{l}} c(x,u \midd n,m,k,l)^2. 
\end{align}
Note that each term in the summation is non-negative.

Now we assume $(n,m,k,l) \in \clN$, so that $m \le n-m$.
Then we have $p(x)=0$ unless $0 \le x \le k \wedge m$.
In order to obtain the conclusion, we want to show $p(x)>0$ for $0 \le x \le k \wedge m$.
Thus, it is enough to show that there is a non-zero Clebsch-Gordan coefficient 
in the summation \eqref{eq:q=1:CG}.
But, it contains the term (i) $u=x$ or (ii) $u=k-x$ or (iii) $u=l \wedge (k-l)$.
If we denote the general Clebsch-Gordan coefficient by $\pair{j_1m_1j_2m_2}{j_3 m_3}$,
then (i) and (ii) correspond to the case $j_3=\abs{j_1 \pm j_2}$, 
and (iii) to the case $m_1=\pm j_1$.
Thus, these terms are on the ``boundary'' of the selection rule.
According to \cite[\S8.1.1, (6)]{VMK}, the corresponding Clebsch-Gordan coefficients are non-zero.
Hence we have the consequence.

\end{proof}

\begin{rmk}
The expression \eqref{eq:q=1:CG} is obviously non-negative, 
in comparison to the Hahn and Racah presentations (\cref{thm:Hahn}, \eqref{eq:q=1:p=R}) 
which are alternating summations.
However, the Clebsch-Gordan symbol is a $\hg{3}{2}$-polynomial, 
so that \eqref{eq:q=1:CG} is a triple sum, and not suitable for further analysis.
\end{rmk}

We return to the proof of \cref{thm:pos}.
It remains to show:

\begin{lem}\label{lem:q;p=0}
For $(n,m,k,l) \in \clN$ and $x \in \{0,1,\dotsc,m\}$, the following equality holds in $\bbQ(q)$.
\begin{align}
 p(x;q) = 0 \quad (x>k).
\end{align}
\end{lem}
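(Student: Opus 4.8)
The plan is to deduce the vanishing from the $q$-Hahn presentation together with a degree count inside the orthogonal system of zonal spherical functions of the Gelfand pair $(\GL(n,\Fq),P(m,n-m,\Fq))$. By \eqref{eq:0:p} and the identity \eqref{eq:q:qR} established in the proof of \cref{thm:sum1}, together with \eqref{eq:q:omega}, we may write
\begin{align*}
 p(x;q)=\frac{\qbinom{n}{x}{q}}{\qbinom{n}{m}{q}}\,q^{x}\,\frac{[n-2x+1]_q}{[n-x+1]_q}\,\Phi_x ,
 \qquad
 \Phi_x\ceq\sum_{i\ge 0}q^{i^{2}}\qbinom{M}{i}{q}\qbinom{N}{i}{q}\,\omega(x;i;n,m;q),
\end{align*}
where $\omega(x;i;n,m;q)=\qHG{3}{2}{q^{-i},q^{-x},q^{x-n-1}}{q^{-m},q^{m-n}}{q}{q}$ and the sum is effectively over $0\le i\le M\wedge N\le m$. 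Since the prefactor is irrelevant, it suffices to show $\Phi_x=0$ for $x>k$. The point will be that $\Phi_x$ is, up to a scalar, the coefficient of the $x$-th zonal spherical function in the expansion of the weight $i\mapsto q^{i^{2}}\qbinom{M}{i}{q}\qbinom{N}{i}{q}$ against the orthogonal weight $i\mapsto v_i$, and that this expansion only involves spherical functions of index $\le k$.

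Concretely, I would set $v_i\ceq q^{i^{2}}\qbinom{m}{i}{q}\qbinom{n-m}{i}{q}=\abs{K_i}/\abs{K}$ (see \eqref{eq:q:K_i/K}), a nonzero element of $\bbQ(q)$ for $0\le i\le m$ since $i\le m\le n-m$. Using the elementary identity $\qbinom{M}{i}{q}/\qbinom{m}{i}{q}=\frac{(q;q)_M}{(q;q)_m}(1-q)^{m-M}\prod_{j=M+1}^{m}[j-i]_q$, valid for \emph{all} $0\le i\le m$ (both sides vanish when $i>M$), and its analogue for $N$, one obtains for all $0\le i\le m$
\begin{align*}
 \frac{q^{i^{2}}\qbinom{M}{i}{q}\qbinom{N}{i}{q}}{v_i}
 =C(q)\prod_{j=M+1}^{m}[j-i]_q\ \prod_{j=N+1}^{n-m}[j-i]_q ,\qquad C(q)\in\bbQ(q),
\end{align*}
and, each factor $[j-i]_q=(1-q^{j}q^{-i})/(1-q)$ being linear in $q^{-i}$, the right-hand side is a polynomial in $q^{-i}$ of degree $(m-M)+(n-m-N)=l+(k-l)=k$. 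On the other hand, $\omega(y;i;n,m;q)$ is a polynomial in $q^{-i}$ of degree exactly $y$ with nonzero leading coefficient in $\bbQ(q)$ (the $q$-shifted factorials occurring in the leading term do not vanish because $y\le m\le\fl{n/2}$). As $q$ is transcendental, the $m+1$ values $q^{-i}$, $i=0,\dotsc,m$, are distinct, so restriction to $\{0,\dotsc,m\}$ is injective on polynomials in $q^{-i}$ of degree $\le k\le m$; hence $\{\omega(y;\,\cdot\,;n,m;q)\}_{y=0}^{k}$ is a $\bbQ(q)$-basis of the space of such restrictions, which contains $i\mapsto v_i^{-1}q^{i^{2}}\qbinom{M}{i}{q}\qbinom{N}{i}{q}$. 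Thus $q^{i^{2}}\qbinom{M}{i}{q}\qbinom{N}{i}{q}=v_i\sum_{y=0}^{k}c_y(q)\,\omega(y;i;n,m;q)$ for $i=0,\dotsc,m$, with $c_y(q)\in\bbQ(q)$.

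Finally I would invoke the orthogonality of zonal spherical functions (general Gelfand-pair theory, \cite[VII.1]{M}): $\sum_{i=0}^{m}v_i\,\omega(x;i;n,m;q)\,\omega(y;i;n,m;q)=\delta_{xy}\,\abs{G/K}/\dim_{\bbC}V_x$, an identity that holds for every prime power $q$ and hence, both sides being rational in $q$, as an identity in $\bbQ(q)$. Substituting the expansion of the previous paragraph into $\Phi_x$ and applying orthogonality gives $\Phi_x=\sum_{y=0}^{k}c_y(q)\,\delta_{xy}\,\abs{G/K}/\dim_{\bbC}V_x$, which vanishes whenever $x>k$, since then no $y\in\{0,\dotsc,k\}$ equals $x$; hence $p(x;q)=0$ for $x>k$. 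I do not expect a genuine obstacle here: the friction is bookkeeping, namely checking that the $q$-binomial ratio identity above is valid on all of $\{0,\dotsc,m\}$ (not merely where the numerator $q$-binomials are nonzero) and that the orthogonality one uses is the one indexed by the $m+1$ double cosets $K_0,\dotsc,K_m$ rather than by the larger index set of the classical $q$-Hahn weight. Granting these, the equality of degrees $(m-M)+(n-m-N)=k$ is what makes everything fit.
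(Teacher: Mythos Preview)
Your argument is correct and the bookkeeping you flag does go through: the $q$-binomial ratio identity extends to all $i\in\{0,\dotsc,m\}$ because the factor $[i-i]_q=0$ appears in the product precisely when $i>M$ (and similarly for $N\le n-m$), and the orthogonality you need is the Gelfand-pair orthogonality on the $m+1$ double cosets, which indeed holds for every prime power $q$ and hence in $\bbQ(q)$.

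However, your approach is genuinely different from the paper's. The paper's proof is a one-line application of Sears' balanced $\qhg{4}{3}$ transformation \cite[(3.2.1)]{GR}: with the specialization $(s,a,b,c,d,e,f)=(x,q^{x-n-1},q^{-M},q^{-N},q^{-m},q^{m-n},q^{-M-N})$, the transformed side carries the prefactor $(de/bc;q)_s=(q^{-k};q)_x$, which vanishes for $x>k$, while the remaining factors stay nonzero. Your route instead passes through the $q$-Hahn presentation, recognizes the weight ratio $v_i^{-1}q^{i^2}\qbinom{M}{i}{q}\qbinom{N}{i}{q}$ as a polynomial of degree exactly $(m-M)+(n-m-N)=k$ in $q^{-i}$, and then kills $\Phi_x$ by orthogonality of zonal spherical functions. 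The paper's proof is much shorter and purely hypergeometric; yours is longer but more illuminating, since it explains \emph{why} the cutoff is at $k$ in terms of the harmonic analysis on $\Gr(m,n)$, namely that the spherical transform of the particular weight lands in the span of $\omega(0;\cdot),\dotsc,\omega(k;\cdot)$.
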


\begin{proof}
Recall Sears' $\qhg{4}{3}$ transformation formula \cite[(3.2.1)]{GR}:
\begin{align}\label{eq:Sears}
 \qHG{4}{3}{q^{-s},a,b,c}{d,e,f}{q}{q} = 
 \frac{(e/a,de/bc;q)_s}{(e,de/abc;q)_s} \qHG{4}{3}{q^{-s},a,d/b,d/c}{d,de/bc,q^{1-s}a/e}{q}{q}
\end{align}
for $s \in \bbN$ and $abc=defq^{s-1}$, i.e., 
both sides are terminating balanced $\qhg{4}{3}$ series. 
Using $M \ceq m-l$ and $N \ceq n-m-k+l$, the function $p(x;q)$ corresponds to the case
\[
 (s,a,b,c,d,e,f)=(x,q^{x-n-1},q^{-M},q^{-N},q^{-m},q^{m-n},q^{-M-N}), 
\]
and in the right hand side of \eqref{eq:Sears}, 
we have the term $(de/bc;q)_s=(q^{-k};q)_x$, which vanishes for $x>k$. 
One can check that the terms in the denominator do not vanish.
Hence, we have the consequence.
\end{proof}

\cref{thm:pos} now follows from \cref{lem:q;p=0} and \cref{prp:p>0}.

Finally, let us write down the proof of \cref{thm:main} \eqref{i:main:3}.
Since we have $\sum_{x=0}^m p(x;q)=1$ by \cref{thm:q:sum=1} 
and $p(x;q) \ge 0$ by \cref{thm:pos},
the function $p(x;q)$ is the pmf of a discrete probability distribution on $\{0,1,\dotsc,m\}$.
The cdf is given by the $\qhg{4}{3}$-series in \cref{thm:q:sum=1}.
Hence we have \cref{thm:main} \eqref{i:main:3}.

The proof of the main \cref{thm:main} is now finished.

\section{Discussion}\label{s:dsc}

In the main \cref{thm:main}, we showed that the function $p(x;q)$ in \cref{dfn:0:p-genq} 
and the limit function $p(x) \ceq \lim_{q\to1}p(x;q)$ are the pmf in the cases where 
(1) $q$ is a prime power, (2) $q=1$, and (3) $q \in \bbR$ with $0 < \abs{q-1} \ll 1$.
In particular, we have $p(x;q) \ge 0$ in these cases.
It remains as a big problem to completely determine
the range of $q \in \bbR$ where $p(x;q) \ge 0$ holds.

Our distribution $P_{n,m,k,l;q}$ is encoded by $q$-Racah polynomial \cref{eq:0:p=qR},
and the orthogonality of $q$-Racah polynomial gives the three-term recursion 
(c.f.\ \cref{prp:rec} for $q=1$ case) which will be used extensively in our sequel \cite{HHY2} 
of the asymptotic analysis.
Since $q$-Racah polynomial is in the top row of the $q$-Askey scheme 
of basic hypergeometric polynomials, it seems difficult to add an extra parameter to 
$P_{n,m,k,l;q}$ while retaining these good properties.

In recent years, there has been an increasing number of studies on 
elliptic and theta hypergeometric series \cite[Chapter 11]{GR}.
As for Racah polynomials, there are known elliptic 6j symbols,
which are further generalization of quantum 6j symbols \cite{KiR}, 
first appeared in Boltzmann weights of fused eight-vertex SOS model \cite{D+}.
As a hypergeometric series, elliptic 6j symbols can be written 
by ${}_{12} V_{11}$ series \cite[\S11.2]{GR}.
As for orthogonality, Spiridonov and Zhedanov \cite{SZ,SZ2} studied elliptic 6j symbols
in the point of view of biorthogonal functions.
See \cite{Ro} for the details and references. 

In contrast to these non-polynomial elliptic analogs,
a polynomial analogue is found in the recent paper \cite{vDG},
where the starting point of the discussion is the difference Heun equation.
This polynomial satisfies the three-term recurrence and the orthogonality.

Now it is tempting to think of an elliptic analogue of our distribution $P_{n,m,k,l;q}$.
The polynomial in \cite{vDG} may be a clue to this problem,
although there do not seem to be any hypergeometric-type summation formulas
for the polynomial, which was the key ingredient in this article. 
Another possible clue is the Gelfand pair $(\GL(n,\Fq),P(m,n-m,\Fq))$ or the Grassmannian 
$\Gr(m,n)$ over $\bbF_q$ which we used to construct $P_{n,m,k,l;q}$.
Is there an elliptic analogue of such a ``classical $q$-object''?
If it exists, then, is there an elliptic analogue of the theory of spherical functions?

\begin{Ack}
M.\ H.\ was supported in part by the National Natural Science Foundation of China under Grant 62171212.
A.\ H.\ was supported in part by JSPS Grant-in-Aids for Scientific Research Grant Number 19K03532. 
S.\ Y.\ was supported in part by JSPS Grant-in-Aids for Scientific Research Grant Number 19K03399.
A part of this article is presented in the 16th OPFSA
(International Symposium on Orthogonal Polynomials, Special Functions and Application).
The authors thank the organizers for the opportunity.
They would also like to thank the referees for valuable comments and suggestions on improvements.
In particular, as explained in \cref{rmk:ref1,rmk:ref2,rmk:ref3}, 
the statement of \cref{thm:main} \eqref{i:main:1} 
and some of the arguments in \cref{ss:q:pos} are added after their suggestions.
Finally, S.Y.\ thanks Professor Masatoshi Noumi for valuable comments on spherical functions
and hypergeometric functions.
\end{Ack}


\end{document}